\newcommand{\rrvert}{\vert}
\newcommand{\llvert}{\vert}
\newcommand{\eqref}[1]{(\ref{#1})}
\newtheorem{theorem}{Theorem}[section]
\newtheorem{lemma}{Lemma}[section]
\newtheorem{lemmaa}{Lemma}[section]
\newtheorem{corollary}{Corollary}[section]
\newtheorem{proposition}{Proposition}[section]
\newcommand{\eps}{\varepsilon}
\newcommand{\vp}{\varphi}
\newcommand{\M}{\mathbb{M}}
\newcommand{\R}{\mathbb{R}}
\newcommand{\F}{\mathbb{F}}
\renewcommand{\P}{\mathbb{P}}
\renewcommand{\L}{\mathbb{L}}
\newcommand{\mfU}{\mathfrak{U}}
\newcommand{\Ac}{\mathcal{A}}
\newcommand{\Uc}{\mathcal{U}}
\newcommand{\D}{\mathcal{D}}
\newcommand{\Fc}{\mathcal{F}}
\begin{document}
\begin{frontmatter}

\title{Stochastic Perron for stochastic target games\thanksref{T1}}
\runtitle{Stochastic Perron for stochastic target games}
\thankstext{T1}{Supported in part by  NSF  Grant DMS-09-55463.}

\begin{aug}
\author[A]{\fnms{Erhan}~\snm{Bayraktar}\corref{}\ead[label=e1]{erhan@umich.edu}}
\and
\author[A]{\fnms{Jiaqi}~\snm{Li}\ead[label=e2]{lijiaqi@umich.edu}}
\runauthor{E. Bayraktar and J. Li}
\affiliation{University of Michigan}
\address[A]{Department of Mathematics\\
University of Michigan\\
530 Church Street\\
Ann Arbor, Michigan 48109\\
USA\\
\printead{e1}\\
\phantom{E-mail:\ }\printead*{e2}}
\end{aug}

%
\received{\smonth{9} \syear{2014}}
%
\revised{\smonth{1} \syear{2015}}


\begin{abstract}
We extend the stochastic Perron method to analyze the framework of
stochastic target games, in which one player tries to find a strategy
such that the state process almost surely reaches a given target no
matter which action is chosen by the other player. Within this
framework, our method produces a viscosity sub-solution
(super-solution) of a Hamilton--Jacobi--Bellman (HJB) equation. We then
characterize the value function as a viscosity solution to the HJB
equation using a comparison result and a byproduct to obtain the
dynamic programming principle.
\end{abstract}

%
\begin{keyword}[class=AMS]
\kwd[Primary ]{93E20}
\kwd{49L20}
\kwd{49L25}
\kwd{60G46}
\kwd[; secondary ]{60H30}
\kwd{91B28}
\kwd{35D05}
\end{keyword}
\begin{keyword}
\kwd{The stochastic target problem}
\kwd{stochastic Perron method}
\kwd{viscosity solutions}
\kwd{geometric dynamic programming principle}
\end{keyword}
\end{frontmatter}

\section{Introduction}\label{sec1}
We will extend the stochastic Perron method to analyze a stochastic
(semi) game where a controller tries to find a strategy such that the
controlled state process almost surely \emph{reaches a given target} at
a given finite time, no matter which control is chosen by an adverse
player (nature). More precisely, the controller has access to a
filtration generated by a Brownian motion and can observe and react to
nature, who may choose a parametrization of the model to be totally
adverse to the controller, in a nonanticipative way. This stochastic
target game was introduced and analyzed in \cite{Bouchard_Nutz_TargetGames}.

In this paper, we will have a fresh look at the problem of Bouchard and
Nutz \cite{Bouchard_Nutz_TargetGames} with a different methodology,
namely the stochastic Perron method. Using this method we will be able
to drop the assumption on the concavity of the Hamiltonian assumed in
\cite{Bouchard_Nutz_TargetGames}. The stochastic Perron method was
introduced in \cite{Bayraktar_and_Sirbu_SP_LinearCase} for analyzing
linear problems, in \cite{Bayraktar_and_Sirbu_SP_DynkinGames} for
Dynkin games involving free-boundary games and in \cite
{Bayraktar_and_Sirbu_SP_HJBEqn} for stochastic control problems. This
method is a type of verification theorem, which identifies the value
function as the unique solution to a corresponding HJB equation without
going through the dynamic programming principle, but does not require
the smoothness of the value function. It is a stochastic version of the
Perron method \cite{MR1118699} in that it creates classes of sub- and
super-solutions that envelope the value function and are closed under
maximization and minimization, respectively. More recently, the
stochastic Perron method was adjusted to solve exit time problems in
\cite{MR3217159}, state constraint problems in~\cite{ECP3616}, singular
control problems in~\cite{2014arXiv1404.7406B}, stochastic games in
\cite{Sirbu_SP_elementary_strategy} and\vadjust{\goodbreak} control problems with model
uncertainty in \cite{Sirbu_SP_CompanionPaper} and~\cite
{2014arXiv1409.6233B}. In this paper, we show how the main ideas of
this method can be modified to analyze the stochastic target games of
Bouchard and Nutz \cite{Bouchard_Nutz_TargetGames}.

The main difficulty of this analysis is identifying the correct
collections of stochastic sub- and super-solutions. Once this is
established, the technical contribution is in showing that in fact the
supremum and the infimum of the respective families are viscosity
super- and sub-solutions, respectively. Then a comparison result
establishes the claim since the value function is already enveloped by
these two families. The identification of these classes and the
technical proofs turn out to be quite different from the works cited
above because of the difference between nature of the stochastic target
problems and the nature of the stochastic control problems. Unlike the
usual stochastic control problems, the goal of the target problems is
to beat a stochastic target almost surely by applying the admissible
controls. These problems, which are generalizations of the
super-hedging problems that appear in mathematical finance, were
introduced in the seminal papers \cite{MR1920265} and \cite{MR1924400};
see \cite{Touzi_book} for a more recent exposition. Stochastic target
games, on the other hand, were considered recently by Bouchard, Moreau
and Nutz \cite{MR3199977} when the target is of controlled loss type.
The more difficult case of an almost sure target was then analyzed in
\cite{Bouchard_Nutz_TargetGames}.

In this paper we achieve the following:
\begin{itemize}
\item We give a proof of the result that the value function of the
stochastic target game is the unique viscosity solution of the
associated HJB equation without first going through the geometric
dynamic programming principle. What we have is a new method for
analyzing stochastic target problems.
%
\item We give a more elementary proof of the result in \cite
{Bouchard_Nutz_TargetGames}. This way we are able to avoid using
Krylov's method of shaken coefficients, which requires the concavity of
the Hamiltonian.
\end{itemize}

The rest of the paper is organized as follows: In Section~\ref
{sec:prob}, we present the
setup of the stochastic target game, introduce the related HJB equation
and the definitions of the sets of stochastic super- and sub-solutions
(our conceptual contribution). The technical contribution of the paper
is given in Section~\ref{sec:main}, where we characterize the infimum
(supremum) of the stochastic super-solutions (sub-solutions) as the
viscosity sub-solution (super-solution) of the HJB equation. A
viscosity comparison argument concludes that the value function is the
unique bounded continuous viscosity solution of the HJB equation.
Finally, we obtain the dynamic programming principle as a byproduct.
Some technical results are deferred to the \hyperref[app]{Appendix}.

\section{Statement of the problem}\label{sec:prob}
\subsection{The value function}
Let us denote
\[
\D:=[0,T]\times\R^{d}, \qquad\mathcal{D}_{<T}:=[0,T)\times
\R^{d}, \qquad\mathcal{D}_{T}:=\{T\} \times\R^{d}.
\]
Let $\Omega$ be the space of continuous functions $\omega\dvtx
[0,T]\to
\R
^{d}$, and let $\P$ be the Wiener measure on $\Omega$.
We will denote by $W$ the canonical process on $\Omega$, that is,
$W_t(\omega)=\omega_t$, and by $\F=(\Fc_s)_{0\leq s\leq T}$ the
augmented filtration generated by~$W$. For $0\leq t\leq T$ let $\F
^t=(\Fc^t_s)_{0\leq s\leq T}$ be the augmented filtration generated by
$(W_s-W_t)_{s\geq t}$. By convention, $\Fc^t_s$ is trivial for $s\leq t$.

We denote by $\Uc^{t}$ (resp., $\Ac^{t}$) the collection of all $\F
^{t}$-predictable processes in $L^p(\P\otimes\,dt)$ with values in a
given Borel subset $U$ (resp., bounded set $A$) of $\R^{d}$, where
$p\geq2$ is fixed.

Given $(t,x,y)\in\D\times\R$ and $(u,\alpha)\in\Uc^t\times\Ac^t$,
consider the stochastic differential equations (SDEs)
%
\begin{equation}\qquad
\label{eq: dynamics} \cases{ %
 dX(s)=\mu_{X}
\bigl(s,X(s),\alpha_{s}\bigr) \,ds +\sigma_{X}\bigl(s,X(s),
\alpha_{s}\bigr)\,dW_{s},
\vspace*{2pt}\cr
dY(s)=\mu_{Y}\bigl(s,X(s),Y(s),u_{s},
\alpha_{s}\bigr) \,ds +\sigma_{Y}\bigl(s,X(s),Y(s),u_{s},
\alpha_{s}\bigr)\,dW_{s},}
\end{equation}
with initial data $(X(t),Y(t))=(x,y)$.
%
\begin{assumption} \label{Assump: drift_and_volatility}
The coefficients $\mu_{X}, \mu_{Y}, \sigma_{X}$ and $\sigma_{Y}$ are
continuous in all variables and take values in $\R^{d}$, $\R$, $\R^{d}$
and $\M^{d}:=\R^{d\times d}$, respectively. There exists $K>0$
such that
\begin{eqnarray*}
\label{eq: cond mu sigma} %
\bigl |\mu_{X}(t,x,\cdot)-
\mu_{X}\bigl(t',x',\cdot\bigr)\bigr|+\bigl|
\sigma_{X}(t,x,\cdot)-\sigma_{X}\bigl(t',x',
\cdot\bigr)\bigr|&\le& K\bigl(\bigl|t-t'\bigr|+\bigl|x-x'\bigr|\bigr),
\\
\bigl|\mu_{X}(\cdot,x,\cdot)\bigr|+\bigl|\sigma_{X}(\cdot,x,\cdot)\bigr|&\le&
K,
\\
\bigl|\mu_{Y}(\cdot,y, \cdot)-\mu_{Y}\bigl(\cdot,y', \cdot\bigr)\bigr| + \bigl|\sigma_{Y}(\cdot,y, \cdot)-
\sigma_{Y}\bigl(\cdot,y', \cdot\bigr)\bigr| &\le& K
\bigl|y-y'\bigr|,
\\
\bigl|\mu_{Y}(\cdot,y, u,\cdot)\bigr|+\bigl|\sigma_{Y}(\cdot,y,u,
\cdot)\bigr|&\le& K\bigl(1+|u|+|y|\bigr),
\end{eqnarray*}
for all $(x,y),(x',y')\in\R^d\times\R$ and $u\in U$.
\end{assumption}
This assumption ensures that the stochastic differential equations
given in \eqref{eq: dynamics} are well posed. Denote the solutions to
\eqref{eq: dynamics} by $(X^{\alpha}_{t,x}, Y^{u,\alpha}_{t,x,y})$.
Let $t\le T$. We say that a map $\mathfrak{u}\dvtx\Ac^{t}\to\Uc^{t}$,
$\alpha
\mapsto\mathfrak{u}[\alpha]$ is a $t$-admissible strategy if it is
nonanticipating in the sense that
\[
\label{eq: cond non anticipativity} \bigl\{\omega\in\Omega\dvtx
\alpha(\omega){|_{[t,s]}}=
\alpha'(\omega){|_{[t,s]}}\bigr\}\subset\bigl\{\omega\in
\Omega\dvtx\mathfrak{u}[\alpha](\omega) {|_{[t,s]}}= \mathfrak
{u}\bigl[
\alpha'\bigr](\omega) {|_{[t,s]}}\bigr\} \mbox{-a.s.}
\]
for all $s\in[t,T]$ and $\alpha,\alpha'\in\Ac^t$, where $|_{[t,s]}$
indicates the restriction to the interval $[t,s]$. We denote by $\mfU
(t)$ the collection of all $t$-admissible strategies; moreover, we
write $Y^{\mathfrak{u},\alpha}_{t,x,y}$ for $Y^{\mathfrak{u}[\alpha
],\alpha}_{t,x,y}$.
Then we can introduce the value function of the stochastic target game,
%
\begin{equation}\qquad
\label{eq: def v} v(t,x):=\inf\bigl\{y\in\R\dvtx\exists\mathfrak
{u}\in\mfU(t) \mbox{ s.t. } Y^{\mathfrak{u},\alpha}_{t,x,y}(T)\ge g\bigl(X^{\alpha}_{t,x}(T)
\bigr) \mbox{-a.s.}\ \forall\alpha\in\Ac^{t} \bigr\},
\end{equation}
where $g\dvtx\R^d\to\R$ is a bounded and measurable function. We
also need
to define strategies starting at a family of stopping times. Let
$\mathbb{S}^t$ be the set of $\mathcal{F}^t$-stopping times valued in $[t,T]$.

\begin{definition}[(Nonanticipating family of stopping times)] \label
{def: nonanticipating family_of_stopping times}\break 
Let $\{\tau^{\alpha}\}_{\alpha\in\Ac^t} \subset\mathbb{S}^t$ be a
family of stopping times. This family is $t$-\break nonanticipating if
\begin{eqnarray*}
\label{eq: nonanticipating family_of_stopping times} %
&&\bigl\{\omega\in\Omega\dvtx\alpha(\omega){|_{[t,s]}}=
\alpha'(\omega){|_{[t,s]}}\bigr\}
\\
&&\qquad\subset\bigl\{\omega\in\Omega\dvtx t\leq\tau^{\alpha
}(\omega)=\tau^{\alpha^{\prime}}(\omega)\leq s \bigr\} \cup\bigl\{\omega\in\Omega\dvtx s<\tau
^{\alpha}(\omega), s<\tau^{\alpha^{\prime}}(\omega)\bigr\} \mbox{-a.s.}%
\end{eqnarray*}
Denote the set of $t$-nonanticipating families of stopping times by
$\mathfrak{S}^t$.
\end{definition}
We will use $\{\tau^{\alpha}\}$ for short to represent $\{
\tau
^{\alpha}\}_{\alpha\in\Ac^t}$, which will always denote a
$t$-nonanticipating family of stopping times.
%
\begin{definition}[(Strategies starting at a nonanticipating family of
stopping times)] \label{def:strategies at a family of stopping times}
Fix $t$, and let
$\{\tau^{\alpha}\} \in\mathfrak{S}^t$. We say that a map $\mathfrak
{u}\dvtx\Ac
^{t} \to\Uc^{t}$, $\alpha\mapsto\mathfrak{u}[\alpha]$ is a $(t,\{
\tau^{\alpha
}\})$-admissible strategy if it is nonanticipating in the sense that
\begin{eqnarray*}
\label{eqn:strategies at a family of stopping times} %
&&\bigl\{\omega\in\Omega\dvtx\alpha(\omega){|_{[t,s]}}= \alpha'(\omega){|_{[t,s]}}\bigr
\}\\
&&\qquad\subset\bigl\{\omega\in\Omega\dvtx s<\tau^{\alpha}(\omega), s<
\tau^{\alpha^{\prime}}(\omega)\bigr\} \\
&&{}\qquad\cup
\bigl\{\omega\in\Omega\dvtx t\leq\tau^{\alpha}(\omega)=
\tau^{\alpha^{\prime}}(\omega) \leq s,\\
&&\hspace*{33pt} \mathfrak{u}[\alpha](\omega)
{|_{[\tau^{\alpha}(\omega),s]}}= \mathfrak{u}\bigl[\alpha'\bigr
](\omega)
{|_{[\tau
^{\alpha^{\prime}}(\omega),s]}}\bigr\} \mbox{-a.s.}%
\end{eqnarray*}
for all $s\in[t,T]$ and $\alpha,\alpha'\in\Ac^t$, denoted by
$\mathfrak{u}
\in\mfU(t,\{\tau^{\alpha}\})$.
\end{definition}
It is clear that from Definition~\ref{def:strategies at a
family of stopping times} that if we set $\tau^{\alpha}=t$ for all
$\alpha$, then $ \mfU(t,\{\tau^{\alpha}\})$ is then the same as
$\mfU
(t)$. Hence the above definitions are consistent.

\begin{definition}[(Concatenation)]
Let $\alpha_1, \alpha_2 \in\Ac^t$, $\tau\in\mathbb{S}^t$ be a
stopping time. The concatenation of $\alpha_1, \alpha_2$ is defined
as follows:
\[
\alpha_1\otimes_{\tau} \alpha_2:=
\alpha_1 \mathbh{1}_{[t,\tau)}+ \alpha_2
\mathbh{1}_{[\tau,T]}.
\]
The concatenation of elements in $\Uc^t$ is defined in a similar fashion.
\end{definition}
%
\begin{lemma} \label{lemma: justification of concatenation of strategies}
Fix $t$, and let $\{\tau^{\alpha}\}\in\mathfrak{S}^t$. For
$\mathfrak{u}\in
\mfU(t)$ and $\tilde{\mathfrak{u}} \in\mfU(t,\{\tau^{\alpha}\}
)$, define $\mathfrak{u}
_{*}[\alpha]:= \mathfrak{u}[\alpha] \otimes_{\tau^{\alpha}}
\tilde{\mathfrak{u}}[\alpha
]$.
Then $\mathfrak{u}_{*}\in\mfU(t)$. For the rest of the paper, we
will use
$\mathfrak{u}\otimes_{\tau^{\alpha}} \tilde{\mathfrak{u}}[\alpha
]$ to represent $\mathfrak{u}
[\alpha] \otimes_{\tau^{\alpha}} \tilde{\mathfrak{u}}[\alpha]$.
\end{lemma}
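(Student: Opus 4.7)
The plan is to verify directly the non-anticipation property of Definition for $t$-admissible strategies, i.e.\ to show that for every $s\in[t,T]$ and every $\alpha,\alpha'\in\Ac^t$,
\[
\bigl\{\alpha|_{[t,s]}=\alpha'|_{[t,s]}\bigr\}\subset \bigl\{\mfu_*[\alpha]|_{[t,s]}=\mfu_*[\alpha']|_{[t,s]}\bigr\}\quad \P\text{-a.s.}
\]
Using that $\{\tau^\alpha\}\in\mathfrak{S}^t$ is $t$-non-anticipating, I would partition the event $E:=\{\alpha|_{[t,s]}=\alpha'|_{[t,s]}\}$ (up to a $\P$-null set) into the two events
\[
E_1=E\cap\{t\le\tau^\alpha=\tau^{\alpha'}\le s\},\qquad E_2=E\cap\{s<\tau^\alpha,\;s<\tau^{\alpha'}\},
\]
and verify the equality of the two processes on $[t,s]$ separately on each $E_i$. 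This reduces the proof to two genuinely local checks.

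On $E_2$, the stopping time hits after time $s$, so by the definition of the concatenation we have $\mfu_*[\alpha]|_{[t,s]}=\mfu[\alpha]|_{[t,s]}$ and likewise for $\alpha'$; the non-anticipation of $\mfu\in\mfU(t)$ then yields $\mfu[\alpha]|_{[t,s]}=\mfu[\alpha']|_{[t,s]}$ on this event. On $E_1$, write
\[
\mfu_*[\alpha]|_{[t,s]}=\mfu[\alpha]|_{[t,\tau^\alpha)}\;\text{concatenated with}\;\tilde{\mfu}[\alpha]|_{[\tau^\alpha,s]},
\]
and analogously for $\alpha'$. For the first piece, $[t,\tau^\alpha)\subset[t,s]$, so the non-anticipation of $\mfu$ gives $\mfu[\alpha]|_{[t,\tau^\alpha)}=\mfu[\alpha']|_{[t,\tau^\alpha)}$ on $E_1$; for the second piece, the fact that $\tau^\alpha=\tau^{\alpha'}$ on $E_1$ together with the non-anticipation of $\tilde{\mfu}\in\mfU(t,\{\tau^\alpha\})$ (the second clause in Definition~\ref{def:strategies at a family of stopping times}) gives $\tilde{\mfu}[\alpha]|_{[\tau^\alpha,s]}=\tilde{\mfu}[\alpha']|_{[\tau^{\alpha'},s]}$. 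Combining these two pieces yields $\mfu_*[\alpha]|_{[t,s]}=\mfu_*[\alpha']|_{[t,s]}$ on $E_1$.

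The main (mild) obstacle is purely bookkeeping with $\P$-null sets: the inclusions provided by the three non-anticipation hypotheses hold only almost surely, so one must intersect a finite collection of such full-measure events and check that the partition $E=E_1\cup E_2$ indeed holds modulo a null set; this is exactly what the definition of $\mathfrak{S}^t$ provides. No dynamics, measurability of the SDE coefficients, or additional estimates enter — the result is a pure structural statement about the concatenation of non-anticipating maps, and the proof is complete once the two cases above are consolidated.
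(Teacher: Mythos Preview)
Your proof is correct and follows essentially the same approach as the paper's own proof: both partition the event $\{\alpha|_{[t,s]}=\alpha'|_{[t,s]}\}$ into the two cases $\{t\le\tau^\alpha=\tau^{\alpha'}\le s\}$ and $\{s<\tau^\alpha,\,s<\tau^{\alpha'}\}$ (using the definition of $\mathfrak{S}^t$), and then handle each case by invoking the non-anticipation of $\mfu$ and of $\tilde{\mfu}$ respectively. The only cosmetic omission is that you do not explicitly remark that $\mfu_*$ maps $\Ac^t$ into $\Uc^t$, which the paper records as obvious.
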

\begin{pf}
It is obvious that $u_{*}$ maps $\Ac^t$ to $\Uc^t$. Let us check the
nonanticipativity of the map. For any fixed $s\in[t,T]$ and $\alpha
,\alpha'\in\Ac^t$, $\omega'\in\{\omega\in\Omega\dvtx \alpha
(\omega
){|_{[t,s]}}= \alpha'(\omega){|_{[t,s]}}\} $, by Definition~\ref{def:
nonanticipating family_of_stopping times},
%
\begin{equation}
\label{eq: concatenation_1} \omega' \in\bigl\{ t\leq\tau^{\alpha}=
\tau^{\alpha^{\prime}}\leq s \bigr\} \cup\bigl\{ s<\tau^{\alpha}, s<
\tau^{\alpha^{\prime}}\bigr\} \mbox{-a.s.}
\end{equation}
(i) If $\omega' \in\{ t\leq\tau^{\alpha}=\tau^{\alpha^{\prime}}\leq s \} $,
by the definition of $\mathfrak{u}_{*}$,
\begin{eqnarray*}
\mathfrak{u}_{*}[\alpha]\bigl(\omega'
\bigr){|_{[t,s]}}&=&\mathfrak{u}[\alpha]\bigl(\omega'\bigr)
\mathbh{1}_{[t,\tau^{\alpha}(\omega'))}{|_{[t,s]}}+ \tilde
{\mathfrak{u}}[\alpha]
\bigl(\omega'\bigr) \mathbh{1}_{[\tau^{\alpha}(\omega'),T]} {|_{[t,s]}},
\\
\mathfrak{u}_{*}\bigl[\alpha'\bigr]\bigl(\omega'\bigr){|_{[t,s]}}&=&\mathfrak{u}\bigl[\alpha
'\bigr]\bigl(\omega'\bigr) \mathbh{1}_{[t,\tau^{\alpha^{\prime}}(\omega
'))}{|_{[t,s]}}+
\tilde{\mathfrak{u}}\bigl[\alpha'\bigr]\bigl(\omega'
\bigr) \mathbh{1}_{[\tau^{\alpha^{\prime}}(\omega'),T]} {|_{[t,s]}}.
\end{eqnarray*}
Since $\tau^{\alpha}(\omega')=\tau^{\alpha^{\prime}}(\omega')$,
$\mathfrak{u}\in\mfU
(t)$ and by Definition~\ref{def:strategies at a family of stopping
times}, we know
\[
\omega' \in\bigl\{\omega\in\Omega\dvtx\mathfrak{u}[\alpha
](\omega)
{|_{[t,s]}}= \mathfrak{u} \bigl[\alpha'\bigr](\omega)
{|_{[t,s]}}\bigr\} \mbox{-a.s.}
\]
(ii) If $\omega' \in\{ s<\tau^{\alpha}, s<\tau^{\alpha^{\prime}}\}$, using
the definition of $\mathfrak{u}_{*}$,
\begin{eqnarray*}
\mathfrak{u}_{*}[\alpha]\bigl(\omega'
\bigr){|_{[t,s]}}&=&\mathfrak{u}[\alpha]\bigl(\omega'
\bigr){|_{[t,s]}},
\\
\mathfrak{u}_{*}\bigl[\alpha'\bigr]\bigl(\omega'\bigr){|_{[t,s]}}&=&\mathfrak{u}\bigl[\alpha
'\bigr]\bigl(\omega'\bigr){|_{[t,s]}}.
\end{eqnarray*}
Since $\omega'\in\{\omega\in\Omega\dvtx \alpha(\omega){|_{[t,s]}}=
\alpha
'(\omega){|_{[t,s]}}\} $ and $\mathfrak{u}\in\mfU(t)$, then
$\omega'\in\{\omega\in\Omega\dvtx \break  \mathfrak{u}_{*}[\alpha
](\omega)
{|_{[t,s]}}=
\mathfrak{u}_{*}[\alpha'](\omega) {|_{[t,s]}}\} \mbox{-a.s.}$
\end{pf}

\subsection{The HJB equation}

Before giving the HJB equation, we will introduce some notation and an
assumption, which was also assumed in \cite{Bouchard_Nutz_TargetGames}.
Given $(t,x,y,z,a)\in\D\times\R\times\R^{d}\times A$, define the set
\[
N(t,x,y,z,a):=\bigl\{u\in U\dvtx\sigma_{Y}(t,x,y,u,a)=z\bigr\}.
\]
%
\begin{assumption}\label{ass: def hat u + regu} $u\mapsto\sigma
_{Y}(t,x,y,u,a)$ is invertible. More precisely, there exists a
measurable map $\hat u\dvtx\D\times\R\times\R^{d} \times A\to U$
such that
$ N=\{\hat u\}$. Moreover, the map $\hat u(\cdot, a)$ is continuous for
each $a\in A$.
\end{assumption}
Let us define for $(t,x,y,p,M)\in\D\times\R\times\R
^{d}\times\M^{d}$,
\begin{eqnarray*}
\label{eq: Hamitonianian} &&H(t,x,y,p,M):=\sup_{a \in A} \biggl\lbrace-
\mu^{\hat
u}_{Y}\bigl(t,x,y,\sigma_{X}(t,x,a)p,a
\bigr)+\mu_{X}(t,x,a)^{\top} p \\
&&\hspace*{202pt}{}+ \frac{1}2
\operatorname{Tr}\bigl[\sigma_{X}\sigma_{X}^{\top
}(t,x,a)M
\bigr] \biggr\rbrace,
\end{eqnarray*}
where
\[
\mu^{\hat u}_{Y}(t,x,y,z,a):=\mu_{Y}\bigl(t,x,y,
\hat u(t,x,y,z,a),a\bigr), \qquad z\in\R^{d}.
\]
Consider the equation
%
\begin{eqnarray}
\label{HJB equation} %
\phi_t+H\bigl(t,x,
\phi,D\phi,D^2\phi\bigr)&=&0\qquad \mbox{on } \D_{<T},
\nonumber
\\[-8pt]
\\[-8pt]
\nonumber
\phi&=&g \qquad\mbox{on } \D_{T}.
\end{eqnarray}

%
\subsection{Stochastic solutions}
We will introduce weak solution concepts to the HJB equation that are
stable under minimization and maximization, respectively, and envelope
the value function $v$ of the stochastic target game.
%
\begin{definition} [(Stochastic super-solutions)] \label{def:
Stochasticsuper-solution}
A function $w\dvtx[0,T]\times\mathbb{R}^d \rightarrow\mathbb{R}$ is
called a stochastic super-solution of (\ref{HJB equation}) if:
\begin{longlist}[(1)]
\item[(1)] it is bounded, continuous and $w(T, \cdot)\geq g(\cdot)$;
\item[(2)] for fixed $(t,x,y)\in\D\times\mathbb{R}$ and $\{\tau^{\alpha
}\}
\in\mathfrak{S}^t$, for any $\mathfrak{u}\in\mfU(t)$, there exists a
strategy $\tilde{\mathfrak{u}}\in\mfU(t,\{\tau^{\alpha}\})$ such
that for any
$\alpha\in\mathcal{A}^t$ and each stopping time $\rho\in\mathbb
{S}^t$, $\tau^{\alpha}\leq\rho\leq T$ with the simplifying notation
$X:= X_{t,x}^{\alpha}, Y:=Y_{t,x,y}^{\mathfrak{u}\otimes_{\tau
^{\alpha}}\tilde
{\mathfrak{u}}[\alpha],\alpha}$, we have
\[
Y(\rho)\geq w\bigl(\rho, X(\rho)\bigr) \qquad\mathbb{P}\mbox{-a.s.}
\mbox{ on } \bigl
\{Y\bigl(\tau^{\alpha}\bigr)> w\bigl(\tau^{\alpha}, X\bigl(\tau^{\alpha}\bigr)\bigr)\bigr\}.
\]
\end{longlist}
\end{definition}
The set of stochastic super-solutions is denoted by $\Uc^+$. Assume it
is nonempty and $v^{+}:=\inf_{w \in\Uc^{+}} w$. For any stochastic
super-solution $w$, choose $\tau^{\alpha} =t$ for all $\alpha$ and
$\rho
=T$. Then there exists $\tilde{\mathfrak{u}}\in\mfU(t)$ such that,
for any $
\alpha\in\mathcal{A}^t$,
\[
Y_{t,x,y}^{\tilde{\mathfrak{u}},\alpha}(T)\geq w \bigl(T,
X_{t,x}^{\alpha
}(T)
\bigr)\geq g \bigl(X_{t,x}^\alpha(T) \bigr)\qquad \mathbb{P}\mbox{-a.s.} \mbox{ on } \bigl\{y>w(t,x)\bigr\}.
\]
Hence, $y>w(t,x)$ implies $y\geq v(t,x)$ from \eqref{eq: def v}. This
gives $w\geq v$ and $v^{+}\geq v$. Similarly, we could define the
stochastic sub-solutions.
%
\begin{definition} [(Stochastic sub-solutions)] \label{def:
Stochasticsub-solution}
A function $w\dvtx[0,T]\times\mathbb{R}^d \rightarrow\mathbb{R}$ is
called a stochastic sub-solution of (\ref{HJB equation}) if:
\begin{longlist}[(1)]
\item[(1)] it is bounded, continuous and $w(T, \cdot)\leq g(\cdot)$;
\item[(2)] for fixed $(t,x,y)\in\D\times\mathbb{R}$ and $\{\tau^{\alpha
}\}
\in\mathfrak{S}^t$, for any $\mathfrak{u}\in\mfU(t)$, $\alpha\in
\mathcal
{A}^t$, there
exists $\widetilde{\alpha}\in\Ac^t$ (may depend on $\mathfrak{u}$,
$\alpha$
and $\tau^{\alpha}$) such that for each stopping time $\rho\in
\mathbb
{S}^t$, $\tau^{\alpha}\leq\rho\leq T$ with the simplifying notation
$X:= X_{t,x}^{\alpha}, Y:=Y_{t,x,y}^{\mathfrak{u},\alpha\otimes
_{\tau^{\alpha
}}\widetilde{\alpha}}$, we have
\[
\mathbb{P} \bigl(Y(\rho)< w \bigl(\rho, X(\rho) \bigr)| B \bigr)>0,
\]
for any $B \subset\{Y(\tau^{\alpha})
< w(\tau^{\alpha}, X(\tau^{\alpha
}))\}$, $B \in
\Fc^t_{\tau^{\alpha}}$ and $\mathbb{P}(B)>0$.
\end{longlist}
\end{definition}
The set of stochastic sub-solutions is
denoted by $\Uc^-$. Assume it is nonempty, and let $v^{-}:=\sup
_{w \in\Uc^{-}} w$. For any stochastic sub-solution $w$, choose $
\tau^{\alpha} =t$ for all $\alpha$ and $\rho=T$. Hence for any $
\mathfrak{u}\in\mfU(t)$, there exists $\widetilde{\alpha}\in
\mathcal{A}^t$,
such that %
\[
\mathbb{P} \bigl(Y_{t,x,y}^{\mathfrak{u},\widetilde{\alpha
}}(T)< w \bigl(T,
X_{t,x}^{\widetilde{\alpha}}(T) \bigr)\leq g\bigl
(X_{t,x}^{\widetilde
{\alpha
}}(T)
\bigr) | y<w(t,x) \bigr)>0.
\]
Hence, $y<w(t,x)$ implies $y\leq v(t,x)$ from \eqref{eq: def v}.
This gives $w\leq v$ and $v^{-}\leq v$. As a result we have
%
\begin{equation}\label{eq:intfvmavp} v^- \triangleq\sup_{w\in
\mathcal{U}^-}
w\leq v \leq\inf_{w\in
\mathcal{U}^+}w\triangleq v^+.
\end{equation}
We will show in Section~\ref{sec:main} that under some suitable assumptions, $v^{+}$
and $v^{-}$ are viscosity sub- and super-solutions of (\ref{HJB
equation}), respectively.

\subsection{Additional technical assumptions}

We will need to make some more technical assumptions as in \cite
{Bouchard_Nutz_TargetGames}.

\begin{assumption}\label{ass: regu muY_hatu} The map
$(t,x,y,z)\in\D\times\R\times\R^{d}\mapsto\mu_{Y}^{\hat
u}(t,x,y,z,a)$ is Lipschitz continuous, uniformly in $a\in A$, and
$(y,z)\in\R\times\R^{d}\mapsto\mu_{Y}^{\hat u}(t,x,y,z,a)$ has linear
growth, uniformly in $(t,x,a)\in\D\times A$.
\end{assumption}

For the derivation of the super-solution property of $v^-$, we will
impose a condition on the growth of $\mu_Y$ relative to $\sigma_Y$.
%
\begin{assumption}\label{assump: relative_growth_condition_mu_to_sigma}
\[
\sup_{u\in U}\frac{|\mu_{Y}(\cdot,u,\cdot)|}{1+\|\sigma_{Y}(\cdot
,u,\cdot)\|} \qquad\mbox{is locally bounded},
\]
where $\|\cdot\|$ is the Euclidean norm.
\end{assumption}
%

In \eqref{eq:intfvmavp} we implicitly assume that the sets $\Uc^{+}$
and $\Uc^{-}$ are nonempty. The assumptions we made already imply that
$\Uc^{+}$ is not empty, but the same may not be true when $\Uc^{-}$ is
not empty.
%
\begin{assumption}\label{ass:colunepty}
The collection $\Uc^{-}$ is not empty.
\end{assumption}

\subsection{When $\mathcal{U}^+$ and $\mathcal{U}^-$ are not empty}

As the next result shows, the assumptions above already guarantee that
$\Uc^{+}$ is not empty.
%
\begin{proposition}\label{prop:Upnemty}
Under Assumptions~\ref{Assump: drift_and_volatility}, \ref{ass: def hat
u + regu} and \ref{ass: regu muY_hatu} the collection $\mathcal{U}^+$
is not empty.
\end{proposition}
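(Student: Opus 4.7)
The plan is to construct an explicit stochastic super-solution of exponential form by choosing the strategy that kills the diffusion coefficient of $Y$. Let $M:=\|g\|_\infty$ and let $L$ be a common Lipschitz/linear-growth constant for $\mu_Y^{\hat u}$ as in Assumption~\ref{ass: regu muY_hatu}, so in particular $|\mu_Y^{\hat u}(t,x,y,0,a)|\le L(1+|y|)$ for all $(t,x,a)\in\D\times A$. I define
\[
w(t,x):=(M+1)e^{L(T-t)}-1,\qquad (t,x)\in\D.
\]
Then $w$ is continuous and bounded on $\D$, and $w(T,\cdot)=M\ge g$, giving item (1) of Definition~\ref{def: Stochasticsuper-solution}.

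For item (2), fix $(t,x,y)\in\D\times\R$, $\{\tau^\alpha\}\in\mathfrak{S}^t$ and $\mfu\in\mfU(t)$. For each $\alpha\in\Ac^t$ I would first compute $X:=X^\alpha_{t,x}$ and the process $Y$ driven by $\mfu[\alpha]$ on $[t,\tau^\alpha)$ via \eqref{eq: dynamics}; then on $[\tau^\alpha,T]$ invoke Assumption~\ref{ass: def hat u + regu} with $z=0$ and solve pathwise the ODE
\[
Y(s)=Y(\tau^\alpha)+\int_{\tau^\alpha}^{s}\mu_Y^{\hat u}\bigl(r,X_r,Y_r,0,\alpha_r\bigr)\,dr,
\]
which is well-posed by the Lipschitz hypothesis of Assumption~\ref{ass: regu muY_hatu}. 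Setting $\tilde{\mfu}[\alpha](s):=\hat u(s,X_s,Y_s,0,\alpha_s)$ on $[\tau^\alpha,T]$ (and arbitrarily on $[t,\tau^\alpha)$) produces, by measurability of $\hat u$, pathwise uniqueness of the ODE, and non-anticipation of $\{\tau^\alpha\}$, a strategy $\tilde{\mfu}\in\mfU(t,\{\tau^\alpha\})$; Lemma~\ref{lemma: justification of concatenation of strategies} then makes $\mfu\otimes_{\tau^\alpha}\tilde{\mfu}$ a legitimate element of $\mfU(t)$.

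By construction $\sigma_Y\equiv 0$ on $[\tau^\alpha,T]$, so $Y$ is absolutely continuous on that interval and pathwise satisfies $dY/ds\ge -L(1+Y)$ whenever $Y\ge 0$. On the event $\{Y(\tau^\alpha)>w(\tau^\alpha,X(\tau^\alpha))\}$ we have $Y(\tau^\alpha)+1>(M+1)e^{L(T-\tau^\alpha)}>0$; letting $\sigma$ be the first time after $\tau^\alpha$ at which $Y$ returns to $0$, Gronwall's inequality applied to $Y+1$ on $[\tau^\alpha,\sigma)$ yields
\[
Y(s)+1\ge (Y(\tau^\alpha)+1)e^{-L(s-\tau^\alpha)}>(M+1)e^{L(T-s)}=w(s,X(s))+1,
\]
and since $w(s,X(s))\ge M\ge 0$, a continuity bootstrap forces $\sigma=T$. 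Hence $Y(s)>w(s,X(s))$ on $[\tau^\alpha,T)$ and $Y(T)\ge w(T,X(T))$ by continuity, so item (2) holds for every stopping time $\rho\in[\tau^\alpha,T]$.

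The main technical obstacle I anticipate is the rigorous verification that $\tilde{\mfu}\in\mfU(t,\{\tau^\alpha\})$: predictability follows from measurability of $\hat u$ and progressive measurability of $(X,Y,\alpha)$; the non-anticipation property of Definition~\ref{def:strategies at a family of stopping times} reduces to pathwise uniqueness of the ODE (if $\alpha|_{[t,s]}=\alpha'|_{[t,s]}$ and $\tau^\alpha=\tau^{\alpha'}\le s$, then $X$, $Y$, and $\tilde{\mfu}$ coincide on $[\tau^\alpha,s]$); and the required $L^p$ integrability of $\tilde{\mfu}$ uses a uniform-in-$\alpha$ bound on $Y$ on $[\tau^\alpha,T]$ supplied by Gronwall and the linear growth of $\mu_Y^{\hat u}(\cdot,\cdot,0,\cdot)$. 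Modulo these technicalities, the Gronwall comparison above closes the proof.
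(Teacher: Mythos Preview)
Your proof is correct and follows the same underlying idea as the paper's: pick a function $w$ that only depends on $t$, use Assumption~\ref{ass: def hat u + regu} to select the control $\hat u(\cdot,0,\cdot)$ so that $\sigma_Y\equiv 0$ after $\tau^\alpha$, and then compare $Y$ with $w$ via Gronwall. The paper organizes this differently: it first constructs the classical super-solution $\phi'(t)=-e^{\lambda t}+N_2$ of \eqref{HJB equation} (again $x$-independent), then proves the general lemma ``every classical super-solution is a stochastic super-solution'' by applying It\^o to $w(\cdot,X)-Y$ and Gronwall. In carrying out that general verification the paper first assumes $\mu_Y^{\hat u}$ is non-decreasing in $y$ and then removes this via the exponential change of variables $\tilde Y=e^{ct}Y$.

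What your approach buys is economy: because your $w$ satisfies exactly $w_t=-L(1+w)$ and the controlled $Y$ satisfies $dY/ds\ge -L(1+Y)$ on $\{Y\ge 0\}$, the comparison is a one-line pathwise Gronwall on $Y+1$, with no need for the monotonicity hypothesis or the reparameterization step. What the paper's route buys is a reusable intermediate result (classical super-solutions are stochastic super-solutions), which is of independent interest even though it is proved with more machinery than strictly necessary for the specific $x$-independent $w$ at hand. The technical point you flag---that $\tilde{\mfu}[\alpha]=\hat u(\cdot,X,Y,0,\alpha)$ lies in $\mfU(t,\{\tau^\alpha\})$, in particular the $L^p$-integrability---is treated at the same level of detail in the paper (``It is not difficult to check that $\tilde{\mfu}\in\mfU(t,\{\tau^\alpha\})$''), so your sketch is on par with the original.
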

\begin{pf}
See the \hyperref[app]{Appendix}.
\end{pf}
In the above proposition the assumptions made can be replaced by the
following natural assumption (although this is not the route we will take):

\begin{assumption} \label{assum: stochastic semisolution is nonempty}
There exists $\mathbf{u}\in U$ such that $ \mu_Y(t,x,y,\mathbf
{u},a)=0$, $\sigma_Y(t,x,y,\mathbf{u},a)=0$ for all $(t,x,y,a)\in
\mathcal{D}_{<T}
\times\mathbb{R} \times A $. (In these equations the right-hand sides
are denoted by just 0 for simplicity, but they in fact are collections
of 0's matching the dimension on the left-hand side.)
\end{assumption}

In the context of super-hedging in mathematical finance, in which $Y$
represents the wealth of an investor and $X$ the stock price, and
$g(X_T)$ a financial contract, the last assumption is equivalent to
allowing the investor not to trade in the risky assets.

\begin{proposition}\label{prop:ancfup}
Under Assumptions~\ref{Assump: drift_and_volatility} and \ref{assum:
stochastic semisolution is nonempty} the collection $\mathcal{U}^+$ is
not empty.
\end{proposition}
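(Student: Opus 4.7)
\smallskip
\noindent\textbf{Proof proposal.} The natural candidate for a member of $\Uc^+$ is a sufficiently large constant function. Fix any $M\geq \|g\|_{\infty}$ and set $w(t,x)\equiv M$. Condition (1) of Definition~\ref{def: Stochasticsuper-solution} is then immediate: $w$ is bounded and continuous, and $w(T,\cdot)=M\geq g(\cdot)$ since $g$ is bounded.

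To verify condition (2), fix $(t,x,y)\in\D\times\R$, $\{\tau^{\alpha}\}\in\mathfrak{S}^t$, and $\mfu\in\mfU(t)$. Define $\tilde{\mfu}[\alpha]\equiv \mathbf{u}$ for every $\alpha\in\Ac^t$, where $\mathbf{u}\in U$ is the distinguished control provided by Assumption~\ref{assum: stochastic semisolution is nonempty}. The constant process $\mathbf{u}$ is trivially $\F^t$-predictable and belongs to $L^p(\P\otimes dt;U)$, so $\tilde{\mfu}[\alpha]\in\Uc^t$. Moreover $\tilde{\mfu}[\alpha]|_{[\tau^{\alpha},s]}=\tilde{\mfu}[\alpha']|_{[\tau^{\alpha'},s]}=\mathbf{u}$ identically, so the non-anticipativity condition in Definition~\ref{def:strategies at a family of stopping times} reduces to the defining property of $\{\tau^{\alpha}\}\in\mathfrak{S}^t$; hence $\tilde{\mfu}\in\mfU(t,\{\tau^{\alpha}\})$. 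By Lemma~\ref{lemma: justification of concatenation of strategies} the concatenation $\mfu\otimes_{\tau^{\alpha}}\tilde{\mfu}[\alpha]$ lies in $\Uc^t$, and $Y:=Y^{\mfu\otimes_{\tau^{\alpha}}\tilde{\mfu}[\alpha],\alpha}_{t,x,y}$ is well defined by Assumption~\ref{Assump: drift_and_volatility}.

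The crucial point is that on the stochastic interval $[\tau^{\alpha},T]$ the control equals $\mathbf{u}$, so by Assumption~\ref{assum: stochastic semisolution is nonempty} both $\mu_Y(s,X(s),Y(s),\mathbf{u},\alpha_s)$ and $\sigma_Y(s,X(s),Y(s),\mathbf{u},\alpha_s)$ vanish identically. The constant process $\bar Y(s):=Y(\tau^{\alpha})$ therefore solves the $Y$-SDE on $[\tau^{\alpha},T]$ with the same initial value, and Lipschitz continuity in $y$ from Assumption~\ref{Assump: drift_and_volatility} gives strong uniqueness. Hence $Y(\cdot)=Y(\tau^{\alpha})$ a.s.\ on $[\tau^{\alpha},T]$. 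For any stopping time $\rho\in\mathbb{S}^t$ with $\tau^{\alpha}\leq\rho\leq T$, we obtain $Y(\rho)=Y(\tau^{\alpha})$ a.s., so on the event $\{Y(\tau^{\alpha})>w(\tau^{\alpha},X(\tau^{\alpha}))\}=\{Y(\tau^{\alpha})>M\}$,
\[
Y(\rho)=Y(\tau^{\alpha})>M=w(\rho,X(\rho))\quad\P\text{-a.s.}
\]
This verifies condition (2), so $w\in\Uc^+$ and the collection is nonempty.

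There is no substantive obstacle: the argument is essentially a verification, and everything hinges on the zero-coefficient property of the distinguished control $\mathbf{u}$, which makes $Y$ freeze after $\tau^{\alpha}$. The only points requiring care are (i) checking that the constant strategy $\tilde{\mfu}$ indeed lies in $\mfU(t,\{\tau^{\alpha}\})$, which holds because constancy trivially implies non-anticipativity, and (ii) invoking pathwise uniqueness for the $Y$-SDE on $[\tau^{\alpha},T]$ to conclude that $Y$ is constant there.
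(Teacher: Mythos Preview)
Your proof is correct and follows essentially the same approach as the paper: take $w$ to be a large enough constant, use the distinguished control $\mathbf{u}$ from Assumption~\ref{assum: stochastic semisolution is nonempty} as the constant strategy after $\tau^{\alpha}$, and observe that $Y$ freezes on $[\tau^{\alpha},T]$ so the inequality propagates. You spell out the admissibility of the constant strategy and the pathwise-uniqueness justification for $Y(\rho)=Y(\tau^{\alpha})$ in more detail than the paper does, but the argument is the same.
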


\begin{pf}
Choose
the strategy $\tilde{u}[\alpha]=\mathbf{u}$. For any given $\{\tau
^{\alpha}\} \in\mathfrak{S}^t$, we have $\tilde{\mathfrak{u}} \in
\Uc(t,\{\tau
^{\alpha}\})$, and from Assumption~\ref{assum: stochastic semisolution
is nonempty}, it holds for any $\mathfrak{u}\in\mfU(t)$ that
\[
Y_{t,x,y}^{\mathfrak{u}\otimes_{\tau^{\alpha}}\tilde{\mathfrak
{u}}[\alpha],\alpha}(\rho)=Y_{t,x,y}^{\mathfrak{u}\otimes_{\tau
^{\alpha}}\tilde{\mathfrak
{u}}[\alpha],\alpha
}\bigl(\tau^{\alpha}\bigr)\qquad \forall\alpha\in\Ac^t \mbox{ and } \rho\in
\mathbb{S}^t \mbox{ such that } \tau^{\alpha}\leq\rho\leq T.
\]
From the boundedness of $g$, there exists a $C$, such that $g(x)<C$.
Now take $w(t,x)\equiv C$, which clearly satisfies the first condition
in Definition~\ref{def: Stochasticsuper-solution}. On the other hand,
on the set $\{Y(\tau^{\alpha})>w(\tau^{\alpha},X(\tau^{\alpha}))\}
$, we
clearly have that $\{Y(\rho)>w(\rho,X(\rho))\}$ for any $\rho$ such
that $\tau^{\alpha}\leq\rho\leq T$, which gives the second condition
in Definition~\ref{def: Stochasticsuper-solution}.
\end{pf}

\begin{proposition}\label{prop:uminempy}
If in addition to Assumptions~\ref{Assump: drift_and_volatility} there
exists $a\in A$ such that $\mu_Y(t,x,y,u,a)=0$, $\sigma_Y(t,x,y,u,a)=0$
for all $(t,x,y,u)\in\mathcal{D}_{<T}\times\mathbb{R} \times U $,
then $\mathcal
{U}^-$ is not empty.
\end{proposition}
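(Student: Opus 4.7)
The plan is to mirror the construction in Proposition~\ref{prop:ancfup}, but now exploit the assumption to produce a sub-solution rather than a super-solution. The candidate $w$ will again be a constant: since $g$ is bounded, set $c := \inf_{x\in\R^{d}} g(x) \in \R$ and let $w(t,x) \equiv c$. Then $w$ is continuous, bounded, and $w(T,\cdot) = c \le g(\cdot)$, so condition (1) of Definition~\ref{def: Stochasticsub-solution} is immediate.

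For the dynamic condition (2), the idea is that nature will freeze $Y$. Fix $(t,x,y)\in \D\times \R$, $\{\tau^{\alpha}\}\in\mathfrak{S}^{t}$, $\mfu\in\mfU(t)$, and $\alpha\in\Ac^{t}$. Define the adversarial response $\widetilde{\alpha}\equiv a$, which lies in $\Ac^{t}$ because $a$ is a constant taking values in the bounded set $A$ (and the concatenation $\alpha\otimes_{\tau^{\alpha}}\widetilde{\alpha}$ remains $\F^{t}$-predictable with values in $A$). After time $\tau^{\alpha}$ the $Y$-equation becomes $dY(s)=\mu_{Y}(s,X(s),Y(s),u_{s},a)\,ds+\sigma_{Y}(s,X(s),Y(s),u_{s},a)\,dW_{s}=0$ by the hypothesis, regardless of the controller's choice $u_{s}=\mfu[\alpha\otimes_{\tau^{\alpha}}\widetilde{\alpha}]_{s}$. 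By uniqueness of strong solutions to \eqref{eq: dynamics} (guaranteed by Assumption~\ref{Assump: drift_and_volatility}), the constant path $s\mapsto Y(\tau^{\alpha})$ coincides with the true solution on $[\tau^{\alpha},T]$, so $Y(\rho)=Y(\tau^{\alpha})$ a.s.\ for every $\rho\in\mathbb{S}^{t}$ with $\tau^{\alpha}\le\rho\le T$.

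With this observation the verification of (2) is essentially free. For any $B\subset\{Y(\tau^{\alpha})<w(\tau^{\alpha},X(\tau^{\alpha}))\}=\{Y(\tau^{\alpha})<c\}$ with $B\in\Fc^{t}_{\tau^{\alpha}}$ and $\P(B)>0$, the equality $Y(\rho)=Y(\tau^{\alpha})$ combined with the constancy of $w$ yields $\{Y(\rho)<w(\rho,X(\rho))\}=\{Y(\rho)<c\}\supset B$, so $\P(Y(\rho)<w(\rho,X(\rho))\mid B)=1>0$. Hence $w\in\Uc^{-}$.

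There is no serious obstacle here; the only mild care needed is checking that $\widetilde{\alpha}\equiv a$ and its concatenation with $\alpha$ sit in the admissible class $\Ac^{t}$, and invoking pathwise uniqueness of the $Y$-SDE to conclude that vanishing coefficients on $[\tau^{\alpha},T]$ force $Y$ to be constant on that interval. Both are routine consequences of the standing Assumption~\ref{Assump: drift_and_volatility}, so the proof reduces to the two displays above.
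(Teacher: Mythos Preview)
Your proof is correct and follows essentially the same approach as the paper, which simply notes that the argument mirrors Proposition~\ref{prop:ancfup}: take $w$ to be a constant lower bound of $g$, let nature play the constant control $\widetilde{\alpha}\equiv a$ to freeze $Y$ after $\tau^{\alpha}$, and observe that the event $\{Y(\tau^{\alpha})<c\}$ persists to any later $\rho$.
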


\begin{pf}
The proof is similar to that of Proposition~\ref{prop:ancfup}.
\end{pf}

The additional assumption in the latter proposition is not very
reasonable. Below we introduce an alternative assumption.
%
\begin{assumption} \label{assum: stochastic semisolution is nonemptyp}
$\frac{|\mu_{Y}|}{\|\sigma_{Y}\|}$ is bounded on $N=\{
(t,x,y,u,a)\dvtx\sigma_{Y} (t,x,y, u,a)\neq0\}$.
\end{assumption}

\begin{proposition}\label{Prop: U+ notempty}
Under Assumptions~\ref{Assump: drift_and_volatility}, \ref{ass: def hat
u + regu}, \ref{assum: stochastic semisolution is nonempty} and \ref
{assum: stochastic semisolution is nonemptyp}, the collection $\Uc^{-}$
is not empty.
\end{proposition}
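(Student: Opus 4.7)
The plan is to exhibit an explicit constant function as a stochastic sub-solution. Let $c \le \inf_{x\in \R^{d}} g(x)$ (finite since $g$ is bounded) and set $w(t,x)\equiv c$. Then $w$ is bounded, continuous, and $w(T,\cdot)=c\le g(\cdot)$, so condition (1) of Definition~\ref{def: Stochasticsub-solution} is trivially satisfied, and everything reduces to verifying the dynamic condition (2).

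For (2), fix $(t,x,y)$, $\{\tau^{\alpha}\}\in\mathfrak{S}^t$, $\mfu\in\mfU(t)$, and $\alpha\in\Ac^{t}$, and take the trivial adversarial response $\widetilde{\alpha}:=\alpha$, so $\alpha\otimes_{\tau^{\alpha}}\widetilde{\alpha}=\alpha$. Writing $u:=\mfu[\alpha]$ and $Y:=Y_{t,x,y}^{\mfu,\alpha}$, the process $Y$ satisfies $dY=\mu_{Y}\,ds+\sigma_{Y}\,dW$ from \eqref{eq: dynamics}. The core idea is to apply Girsanov to kill the drift and then derive a contradiction via optional stopping.

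Define the market-price-of-risk process
\[
\theta_{s}:=\frac{\mu_{Y}\,\sigma_{Y}^{\top}}{\|\sigma_{Y}\|^{2}}\,\mathbbm{1}_{\{\sigma_{Y}\neq 0\}}.
\]
By Assumption~\ref{assum: stochastic semisolution is nonemptyp}, $|\theta|=|\mu_{Y}|/\|\sigma_{Y}\|$ is bounded on $\{\sigma_{Y}\neq 0\}$; combined with the continuity in Assumption~\ref{Assump: drift_and_volatility}, the boundedness of the ratio forces $\mu_{Y}=0$ on $\{\sigma_{Y}=0\}$, so the extension by zero there is consistent and $|\theta|\le L$ globally. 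Novikov's condition is automatic, and Girsanov's theorem produces an equivalent probability $\Q\sim\P$ under which $\tilde{W}_{s}:=W_{s}+\int_{t}^{s}\theta_{u}\,du$ is a Brownian motion and $dY=\sigma_{Y}\,d\tilde{W}$. Routine $L^{2}$-estimates using the linear growth $\|\sigma_{Y}\|\le K(1+|Y|+|u|)$ from Assumption~\ref{Assump: drift_and_volatility}, together with the integrability of $u$ under $\Q$ (inherited from $u\in L^{p}(\P\otimes dt)$ via H\"{o}lder, since $d\Q/d\P$ has all finite moments when $\theta$ is bounded), upgrade $Y$ from a local $\Q$-martingale to a genuine $\Q$-martingale on $[t,T]$.

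To conclude, fix $\rho\in\mathbb{S}^{t}$ with $\tau^{\alpha}\le\rho\le T$ and $B\in\Fc^{t}_{\tau^{\alpha}}$ with $B\subset\{Y(\tau^{\alpha})<c\}$ and $\P(B)>0$. Suppose for contradiction that $\P(Y(\rho)<c\mid B)=0$; then $Y(\rho)\ge c$ holds $\P$-a.s., hence $\Q$-a.s., on $B$. Optional stopping for the $\Q$-martingale $Y$ gives
\[
c\,\Q(B)\le \E^{\Q}\!\left[Y(\rho)\,\mathbbm{1}_{B}\right]=\E^{\Q}\!\left[Y(\tau^{\alpha})\,\mathbbm{1}_{B}\right]<c\,\Q(B),
\]
where the strict inequality uses $Y(\tau^{\alpha})<c$ pointwise on $B$ together with $\Q(B)>0$ (by equivalence). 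The contradiction forces $\P(Y(\rho)<c\mid B)>0$, completing the verification.

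The main obstacle is the upgrade from local to true $\Q$-martingale, since $\sigma_{Y}$ has linear growth in both $Y$ and $u$; once that integrability is in hand, the Girsanov reduction and the optional-stopping contradiction are both very short. All other steps (the pointwise vanishing $\mu_{Y}=0$ on $\{\sigma_{Y}=0\}$, and the triviality of condition~(1)) are immediate from the continuity and growth hypotheses.
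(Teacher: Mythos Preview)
Your argument is the paper's proof almost verbatim: take $w\equiv\inf g$, kill the drift of $Y$ via Girsanov with the bounded kernel $\mu_Y\sigma_Y^{\top}/\|\sigma_Y\|^2$, and reach a contradiction by optional stopping under the equivalent measure $\mathbb Q$. Two places warrant tightening.

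First, the step ``continuity plus bounded ratio forces $\mu_Y=0$ on $\{\sigma_Y=0\}$'' is not justified by Assumption~\ref{Assump: drift_and_volatility} and Assumption~\ref{assum: stochastic semisolution is nonemptyp} alone: it requires every zero of $\sigma_Y$ to lie in the closure of $\{\sigma_Y\neq 0\}$, which is not automatic. This is exactly where Assumptions~\ref{ass: def hat u + regu} and~\ref{assum: stochastic semisolution is nonempty}---which you list as hypotheses but never invoke---enter, and the paper uses them directly: injectivity of $u\mapsto\sigma_Y(\cdot,u,\cdot)$ (Assumption~\ref{ass: def hat u + regu}) means $\sigma_Y=0$ forces $u=\mathbf u$, whence $\mu_Y(\cdot,\mathbf u,\cdot)=0$ by Assumption~\ref{assum: stochastic semisolution is nonempty}. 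Second, for the local-to-true $\mathbb Q$-martingale upgrade, the paper's route is cleaner than transferring the integrability of $u$ to $\mathbb Q$: standard SDE moment estimates under $\mathbb P$ already give $\mathbb E_{\mathbb P}\bigl[\sup_s|Y(s)|^2\bigr]<\infty$, and a single H\"older step with $Z_T\in L^2(\mathbb P)$ (bounded $\theta$) yields $\mathbb E_{\mathbb Q}\bigl[\sup_s|Y(s)|\bigr]<\infty$.
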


\begin{pf}
See the \hyperref[app]{Appendix}.
\end{pf}

\section{The main result and its proof}\label{sec:main}
To prove the main theorem, we need some preparatory lemmas.
%
\begin{lemma} The set of stochastic super/sub solutions is
upwards/\break downwards directed; that is:
\begin{longlist}[(1)]
\item[(1)] if $w_1, w_2\in\mathcal{U}^+$, then $w_1\wedge w_2\in\mathcal{U}^+$;
\item[(2)] if $w_1,w_2\in\mathcal{U}^-$, then $w_1\vee w_2\in\mathcal{U}^-$.
\end{longlist}
\end{lemma}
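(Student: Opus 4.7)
The plan is to glue the two witnesses on an $\Fc^t_{\tau^\alpha}$-measurable event that records which of $w_1,w_2$ is already separated from $Y(\tau^\alpha)$. Condition~(1) in either definition is immediate since boundedness, continuity and the terminal inequality are preserved by $\wedge$ and $\vee$, so I focus on the strategic clause.

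For (1), fix $(t,x,y)\in\D\times\R$, $\{\tau^\alpha\}\in\mathfrak{S}^t$ and $\mfu\in\mfU(t)$; take $\tilde{\mfu}_1,\tilde{\mfu}_2\in\mfU(t,\{\tau^\alpha\})$ witnessing the super-solution property of $w_1,w_2$ against this $\mfu$. Since $Y_{t,x,y}^{\mfu,\alpha}(\tau^\alpha)$ is unaffected by the post-$\tau^\alpha$ continuation,
\[
B^\alpha:=\{Y_{t,x,y}^{\mfu,\alpha}(\tau^\alpha)>w_1(\tau^\alpha,X_{t,x}^\alpha(\tau^\alpha))\}\in\Fc^t_{\tau^\alpha},
\]
and I set $\tilde{\mfu}[\alpha]:=\tilde{\mfu}_1[\alpha]\mathbbm{1}_{B^\alpha}+\tilde{\mfu}_2[\alpha]\mathbbm{1}_{(B^\alpha)^c}$. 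Pathwise uniqueness for \eqref{eq: dynamics} transports $\alpha|_{[t,s]}=\alpha'|_{[t,s]}$ with $\tau^\alpha=\tau^{\alpha'}\le s$ to $\mathbbm{1}_{B^\alpha}=\mathbbm{1}_{B^{\alpha'}}$ a.s., which combined with the non-anticipativity of each $\tilde{\mfu}_i$ places $\tilde{\mfu}$ in $\mfU(t,\{\tau^\alpha\})$. On $B^\alpha\cap\{Y(\tau^\alpha)>w_1\wedge w_2(\tau^\alpha,X(\tau^\alpha))\}$ the continuation coincides with $\tilde{\mfu}_1$, so the $w_1$-property yields $Y(\rho)\ge w_1(\rho,X(\rho))\ge w_1\wedge w_2(\rho,X(\rho))$; on the complement, $Y(\tau^\alpha)\le w_1(\tau^\alpha,X(\tau^\alpha))$ together with $Y(\tau^\alpha)>w_1\wedge w_2(\tau^\alpha,X(\tau^\alpha))$ forces $Y(\tau^\alpha)>w_2(\tau^\alpha,X(\tau^\alpha))$, and the $w_2$-property with $\tilde{\mfu}_2$ closes the case symmetrically.

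Part~(2) is dual: given $\mfu\in\mfU(t)$ and $\alpha\in\Ac^t$, pick $\tilde{\alpha}_1,\tilde{\alpha}_2\in\Ac^t$ from the sub-solution property of $w_1,w_2$, let $B^\alpha:=\{Y_{t,x,y}^{\mfu,\alpha}(\tau^\alpha)<w_1(\tau^\alpha,X_{t,x}^\alpha(\tau^\alpha))\}\in\Fc^t_{\tau^\alpha}$, and define $\tilde{\alpha}:=\tilde{\alpha}_1\mathbbm{1}_{B^\alpha}+\tilde{\alpha}_2\mathbbm{1}_{(B^\alpha)^c}\in\Ac^t$. For any $B\in\Fc^t_{\tau^\alpha}$ with $B\subset\{Y(\tau^\alpha)<w_1\vee w_2(\tau^\alpha,X(\tau^\alpha))\}$ and $\P(B)>0$, the splits $B_1:=B\cap B^\alpha\subset\{Y(\tau^\alpha)<w_1(\tau^\alpha,X(\tau^\alpha))\}$ and $B_2:=B\cap(B^\alpha)^c\subset\{Y(\tau^\alpha)<w_2(\tau^\alpha,X(\tau^\alpha))\}$ (the second inclusion because on $(B^\alpha)^c$ one has $Y(\tau^\alpha)\ge w_1(\tau^\alpha,X(\tau^\alpha))$, forcing $w_2>Y(\tau^\alpha)$) yield at least one $B_i$ of positive probability. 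On $B_i$ the process $Y$ coincides pathwise with the one driven by $\tilde{\alpha}_i$, so the $w_i$-sub-solution property gives $\P(Y(\rho)<w_i(\rho,X(\rho))\mid B_i)>0$, and hence $\P(Y(\rho)<w_1\vee w_2(\rho,X(\rho))\mid B)>0$.

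The main obstacle is verifying that the glued objects actually lie in $\mfU(t,\{\tau^\alpha\})$ and $\Ac^t$: this requires a pathwise uniqueness argument for \eqref{eq: dynamics} to ensure that the $\alpha$-dependent selector $\mathbbm{1}_{B^\alpha}$ respects the non-anticipativity clause of Definition~\ref{def:strategies at a family of stopping times}. Once this bookkeeping is in place, the super/sub-solution inequality on $B^\alpha$ and $(B^\alpha)^c$ is a routine case analysis inheriting the desired property from the two ingredient witnesses.
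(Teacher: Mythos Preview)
Your proof is correct and follows the same gluing strategy as the paper's sketch. The only difference is cosmetic: the paper selects via $\{w_1(\tau^\alpha,X(\tau^\alpha))<w_2(\tau^\alpha,X(\tau^\alpha))\}$ rather than your $\{Y(\tau^\alpha)>w_1(\tau^\alpha,X(\tau^\alpha))\}$, but either $\Fc^t_{\tau^\alpha}$-measurable selector leads to the same case analysis and the same pathwise-uniqueness bookkeeping.
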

\begin{pf}
This lemma is in the spirit of Lemma~3.7 in \cite
{Sirbu_SP_elementary_strategy}. Here we only sketch the proof for
$(1)$. For $w_1, w_2\in\mathcal{U}^+$, let $w=w_1\wedge w_2$. Clearly
$w$ is bounded, continuous and $w(T,x)\geq g(x)$. For fixed $(t,x,y)\in
\mathcal{D}_{<T}\times\mathbb{R}$ and $\{\tau^{\alpha}\}\in
\mathfrak{S}^t$, let
$\mathfrak{u}_1$ and $\mathfrak{u}_2$ be the strategies starting at
$\{\tau^{\alpha}\}$
for $w_1$ and $w_2$, respectively. Let
\[
\mathfrak{u}[\alpha]=\mathfrak{u}_1[\alpha]\mathbh{1}_{\{w_1(\tau
^{\alpha},X(\tau
^{\alpha}))<w_2(\tau^{\alpha},X(\tau^{\alpha}))\}}+
\mathfrak{u}_2[\alpha] \mathbh{1}_{\{w_1(\tau^{\alpha},X(\tau
^{\alpha}))\geq w_2(\tau
^{\alpha
},X(\tau^{\alpha}))\}}.
\]
It is easy to show that $\mathfrak{u}$ works for $w$ in the definition of
stochastic super-solutions.
\end{pf}
%
\begin{lemma}There exists a nonincreasing sequence $\mathcal{U}^+\ni
w_n\searrow v^+$ and a nondecreasing sequence $\mathcal{U}^-\ni
v_n\nearrow v^-$.
\end{lemma}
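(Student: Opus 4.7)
The plan is to prove only the first claim; the second is symmetric, using that by the previous lemma $\mathcal{U}^-$ is closed under finite maxima and is therefore upward directed.

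First I would produce a countable subfamily of $\mathcal{U}^+$ whose pointwise infimum already equals $v^+$. Enumerate the rationals $\mathbb{Q}$ and fix $r\in\mathbb{Q}$. For each point of the set $\{v^+<r\}\subset\D$, the definition $v^+(t,x)=\inf_{w\in\mathcal{U}^+}w(t,x)$ furnishes some $w\in\mathcal{U}^+$ with $w(t,x)<r$; since $w$ is continuous, $\{w<r\}$ is open, and as $w$ ranges over $\mathcal{U}^+$ these sets cover $\{v^+<r\}$. Because $\D=[0,T]\times\R^d$ is second countable (hence Lindel\"of), a countable subcover $\{w_{r,k}\}_{k\in\mathbb{N}}\subset\mathcal{U}^+$ suffices. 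Repeating this for each $r\in\mathbb{Q}$ yields a countable collection $\{w_{r,k}\}_{r\in\mathbb{Q},\,k\in\mathbb{N}}\subset\mathcal{U}^+$.

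Next I would show that the pointwise infimum $\hat w$ of this countable family equals $v^+$ on all of $\D$. By construction, for every rational $r>v^+(t,x)$ there is some $(r,k)$ with $w_{r,k}(t,x)<r$, whence $\hat w(t,x)\le r$; letting $r\downarrow v^+(t,x)$ gives $\hat w\le v^+$, while $\hat w\ge v^+$ is automatic since each $w_{r,k}\in\mathcal{U}^+$. Finally, reindex the family as $\{\hat w_n\}_{n\ge1}$ and set $w_n:=\hat w_1\wedge\cdots\wedge \hat w_n$. Iterating the lattice property from the previous lemma, $w_n\in\mathcal{U}^+$; by construction $(w_n)$ is non-increasing, and $\inf_n w_n=\inf_n \hat w_n=\hat w=v^+$ pointwise, so $w_n\searrow v^+$ as required.

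The only essential inputs are the downward directedness of $\mathcal{U}^+$ (already in hand) and the second countability of $\D$, which together deliver this Choquet-style extraction. The main obstacle, modest in this setup, is to be careful that the diagonal enumeration and the iterated minimization preserve both membership in $\mathcal{U}^+$ and pointwise convergence on all of $\D$ rather than merely on a dense subset; the Lindel\"of covering step is precisely what handles the latter issue.
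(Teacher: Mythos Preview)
Your argument is correct and is precisely the standard Lindel\"of/Choquet extraction that the paper invokes by citing Proposition~4.1 of Bayraktar--S\^irbu; the paper does not spell out the details, but the proof there proceeds exactly as you outline, using the continuity of the members of $\mathcal{U}^+$, second countability of $\D$, and the downward directedness from the preceding lemma. There is nothing to add.
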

\begin{pf}
The proof of the lemma follows directly from Proposition~4.1 in \cite
{Bayraktar_and_Sirbu_SP_LinearCase}.
\end{pf}
Let us also state the following well-known result without proof.
%
\begin{lemma} \label{lemma: continuity of taking sup}
Given $f$: $X \times Y \subset\mathbb{R}^p \times\mathbb{R}^q
\rightarrow\mathbb{R}$, define $F(x):=\sup_{y\in Y}f(x, \break y)$. If
$x\rightarrow f(x,y)$ is continuous, uniformly in $y$ and $F(x)<\infty$
for all $x\in X$, then $x\rightarrow F(x)$ is continuous.
\end{lemma}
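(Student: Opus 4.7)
The plan is to carry out a standard $\varepsilon$--$\delta$ argument, exploiting the fact that the continuity in $x$ is uniform in $y$, which is precisely what is needed to exchange a limit with a supremum. Fix $x_0\in X$ and $\varepsilon>0$. By the uniform continuity hypothesis, there exists $\delta>0$ such that for every $y\in Y$ and every $x\in X$ with $|x-x_0|<\delta$ one has $|f(x,y)-f(x_0,y)|<\varepsilon$.

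First I would obtain the ``upper'' half of continuity. For any $y\in Y$ and any $x$ with $|x-x_0|<\delta$,
$$f(x,y) \le f(x_0,y)+\varepsilon \le F(x_0)+\varepsilon,$$
where the last inequality uses $F(x_0)<\infty$. Taking the supremum over $y\in Y$ yields $F(x)\le F(x_0)+\varepsilon$.

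Next I would obtain the ``lower'' half. This is the only place a small subtlety arises, because the supremum defining $F(x_0)$ need not be attained. Since $F(x_0)<\infty$, choose $y_\varepsilon\in Y$ with $f(x_0,y_\varepsilon)\ge F(x_0)-\varepsilon$. Then for $|x-x_0|<\delta$,
$$F(x)\ge f(x,y_\varepsilon)\ge f(x_0,y_\varepsilon)-\varepsilon \ge F(x_0)-2\varepsilon.$$
Combining the two bounds gives $|F(x)-F(x_0)|\le 2\varepsilon$ whenever $|x-x_0|<\delta$, so $F$ is continuous at $x_0$.

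There is really no main obstacle here: the uniform-in-$y$ continuity trivializes the exchange of sup and limit, and the finiteness assumption $F(x_0)<\infty$ is exactly what allows the $\varepsilon$-approximation of the supremum in the lower bound. The only point that requires a moment's thought is not attempting to attain the supremum, and instead using an $\varepsilon$-maximizer $y_\varepsilon$.
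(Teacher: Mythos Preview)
Your argument is correct. The paper does not actually prove this lemma; it simply states it as a ``well-known result without proof,'' so there is nothing to compare against. One small simplification you might note: the lower half can be obtained symmetrically without an $\varepsilon$-maximizer, since for $|x-x_0|<\delta$ and any $y\in Y$ one has $f(x_0,y)\le f(x,y)+\varepsilon\le F(x)+\varepsilon$, and taking the supremum over $y$ gives $F(x_0)\le F(x)+\varepsilon$ directly, yielding the slightly tighter bound $|F(x)-F(x_0)|\le\varepsilon$. Your approach via $y_\varepsilon$ is of course equally valid.
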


\begin{theorem}[(Stochastic Perron for stochastic target games)]\label
{thm: main theorem} Let Assumptions~\ref{Assump: drift_and_volatility}
and \ref{ass: def hat u + regu} hold.
\begin{longlist}[(1)]
\item[(1)] If in addition g is upper semi-continuous (USC) and
Assumption~\ref{ass: regu muY_hatu} holds, the function $v^+$ is a
bounded USC
viscosity sub-solution of (\ref{HJB equation}).
\item[(2)] On the other hand if g is lower semi-continuous (LSC) and
Assumptions~\ref{assump: relative_growth_condition_mu_to_sigma} and
\ref
{ass:colunepty}
hold, the function $v^-$ is a bounded LSC viscosity super-solution of
(\ref{HJB equation}).
\end{longlist}
\end{theorem}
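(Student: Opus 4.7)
The plan is to run the stochastic Perron contradiction template on both parts. First I establish semi-continuity: the lemma above provides sequences $\mathcal{U}^{+}\ni w_n\searrow v^{+}$ and $\mathcal{U}^{-}\ni v_n\nearrow v^{-}$ of continuous functions, so $v^{+}$ is upper semi-continuous and $v^{-}$ is lower semi-continuous; the terminal inequalities $v^{+}(T,\cdot)\geq g\geq v^{-}(T,\cdot)$ follow from the first clauses of Definitions~\ref{def: Stochasticsuper-solution} and~\ref{def: Stochasticsub-solution}. The continuity of the Hamiltonian $H$ in all arguments, which I use freely below, follows from Assumption~\ref{ass: regu muY_hatu} together with Lemma~\ref{lemma: continuity of taking sup}.

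For part (1), suppose toward contradiction that $v^{+}$ is not a viscosity sub-solution at some $(t_0,x_0)\in\mathcal{D}_{<T}$. After the standard perturbation by a quadratic, I obtain a smooth $\varphi$ with $v^{+}\leq\varphi$ on a closed parabolic ball $\bar B_r(t_0,x_0)\subset\mathcal{D}_{<T}$ (equality only at $(t_0,x_0)$) and $\varphi_t+H(\cdot,\varphi,D\varphi,D^2\varphi)<-\eta<0$ on $B_r$. The LSC function $\varphi-v^{+}$ attains a positive minimum $\gamma>0$ on the compact $\partial B_r$. For each $(s,z)\in\partial B_r$ pick $w_{s,z}\in\mathcal{U}^{+}$ with $w_{s,z}(s,z)<v^{+}(s,z)+\gamma/3$; continuity extends $w_{s,z}<\varphi-\gamma/3$ to a neighborhood of $(s,z)$, and a finite subcover combined with the downward-directedness of $\mathcal{U}^{+}$ produces a single $w^{*}\in\mathcal{U}^{+}$ with $w^{*}<\varphi-\gamma/3$ on $\partial B_r$. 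Setting $\epsilon:=\gamma/6$ and $\varphi^{\epsilon}:=\varphi-\epsilon$, the candidate
$$w:=\min(\varphi^{\epsilon},w^{*})\mathbbm{1}_{\bar B_r}+w^{*}\mathbbm{1}_{\mathcal{D}\setminus\bar B_r}$$
is continuous (the pieces agree on $\partial B_r$ since $\varphi^{\epsilon}>w^{*}$ there), bounded, dominates $g$ at time $T$, and satisfies $w(t_0,x_0)\leq\varphi^{\epsilon}(t_0,x_0)=v^{+}(t_0,x_0)-\epsilon<v^{+}(t_0,x_0)$; establishing $w\in\mathcal{U}^{+}$ will therefore contradict $v^{+}=\inf\mathcal{U}^{+}$.

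To verify the game property of $w$, given $(t,x,y)$, $\{\tau^{\alpha}\}\in\mathfrak{S}^t$ and $\mathfrak{u}\in\mathfrak{U}(t)$, define the exit time $\theta^{\alpha}:=\inf\{s\geq\tau^{\alpha}:(s,X^{\alpha}(s))\notin B_r\text{ or }\varphi^{\epsilon}(s,X^{\alpha}(s))\geq w^{*}(s,X^{\alpha}(s))\}\wedge T$ and let $\tilde{\mathfrak{u}}[\alpha]$ be the concatenation at $\theta^{\alpha}$ of the Markovian feedback $\hat u(s,X^{\alpha}(s),Y(s),\sigma_X^{\top}(s,X^{\alpha}(s),\alpha_s)D\varphi(s,X^{\alpha}(s)),\alpha_s)$ on $[\tau^{\alpha},\theta^{\alpha})$ with the super-solution strategy of $w^{*}$ on $[\theta^{\alpha},T]$, admissible by Lemma~\ref{lemma: justification of concatenation of strategies}. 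Itô's formula applied to $Z:=Y-\varphi^{\epsilon}(\cdot,X)$ kills the diffusion by the defining property of $\hat u$ (Assumption~\ref{ass: def hat u + regu}), and the strict HJB inequality—after using continuity of $H$ in the $y$-argument for $r,\epsilon$ small enough—yields a drift bounded below by $\eta/2$ on $[\tau^{\alpha},\theta^{\alpha}]$. Thus from $Y(\tau^{\alpha})>\varphi^{\epsilon}(\tau^{\alpha},X(\tau^{\alpha}))$ the process $Y$ stays strictly above $\varphi^{\epsilon}$ up to $\theta^{\alpha}$; at $\theta^{\alpha}$ one has $w=w^{*}\leq\varphi^{\epsilon}<Y$, and the super-solution property of $w^{*}$ then propagates $Y\geq w$ past $\theta^{\alpha}$. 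The complementary case where $Y(\tau^{\alpha})>w(\tau^{\alpha},X(\tau^{\alpha}))=w^{*}(\tau^{\alpha},X(\tau^{\alpha}))$ is handled directly via the $w^{*}$-strategy from $\tau^{\alpha}$.

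Part (2) is symmetric in the large: max-gluing with a near-$v^{-}$ sub-solution $w^{*}\in\mathcal{U}^{-}$ (nonempty by Assumption~\ref{ass:colunepty}), a test function $\varphi$ touching $v^{-}$ from below so that $\varphi_t+H>\eta>0$ on $B_r$, and a Borel-measurable selector $\hat a$ of the arg-sup in $H$ used as the adversarial perturbation $\widetilde\alpha$. The main obstacle of the proof lies here: unlike part (1), the adversary cannot choose the controller's $u$, so $Z=Y-\varphi^{\epsilon}(\cdot,X)$ has the nonvanishing diffusion $\sigma_Y(Y,u,\hat a)-\sigma_X^{\top}D\varphi$ whenever $u\neq\hat u(\cdot,Y,\sigma_X^{\top}D\varphi,\hat a)$, and a pathwise argument fails. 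Assumption~\ref{assump: relative_growth_condition_mu_to_sigma} is essential here: the local bound on $|\mu_Y|/(1+\|\sigma_Y\|)$ allows a Girsanov change of measure on $[\tau^{\alpha},\theta^{\alpha}]$ whose Radon-Nikodym derivative is a true martingale, and which absorbs the drift of $Z$ into the martingale part, making $Z$ a local supermartingale under an equivalent $\mathbb{Q}$. Starting from $Z(\tau^{\alpha})<0$, continuity of paths together with the $\mathbb{Q}$-supermartingale property forces $Z$ to remain strictly negative through $\theta^{\alpha}$ with positive $\mathbb{Q}$-probability, and equivalence with $\mathbb{P}$ transfers this to the conditional positivity required by Definition~\ref{def: Stochasticsub-solution}, yielding a sub-solution in $\mathcal{U}^{-}$ strictly above $v^{-}$ at $(t_0,x_0)$ and thus contradicting $v^{-}=\sup\mathcal{U}^{-}$.
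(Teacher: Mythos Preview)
Your sketch follows the paper's stochastic Perron template, but it has two genuine gaps.

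\textbf{Boundary conditions are missing.} You write that the terminal inequalities $v^{+}(T,\cdot)\geq g\geq v^{-}(T,\cdot)$ follow from Definitions~\ref{def: Stochasticsuper-solution} and~\ref{def: Stochasticsub-solution}. Those are the \emph{trivial} directions. For $v^{+}$ to be a viscosity \emph{sub}-solution of~\eqref{HJB equation} you need $v^{+}(T,\cdot)\leq g$, and for $v^{-}$ to be a super-solution you need $v^{-}(T,\cdot)\geq g$. Neither is automatic: the paper devotes Steps~1.2 and~2.2 to these, running a separate stochastic Perron argument at the terminal time with test functions of the form $v^{+}(T,x_0)+|x-x_0|^2/\beta+C(T-t)$, and in Step~1.2 an additional exponential reparameterization $\tilde Y=e^{ct}Y$ is needed to reduce to the case where $\mu_Y^{\hat u}$ is monotone in $y$ so that the constant $C$ can be chosen independently of itself. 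Your proof simply does not address this.

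\textbf{Missing $Y$-stopping in the interior sub-solution argument.} After It\^o's formula, the drift of $Z=Y-\varphi^{\epsilon}(\cdot,X)$ under the feedback $\hat u(\cdot,X,Y,\sigma_X^{\top}D\varphi,\alpha)$ is $-[\varphi_t+H^{a}(\cdot,X,Y,D\varphi,D^{2}\varphi)]$ with $Y$, not $\varphi$, in the $y$-slot. Your strict inequality $\varphi_t+H(\cdot,\varphi,D\varphi,D^{2}\varphi)<-\eta$ transfers to $y$ near $\varphi$ only for $|y-\varphi|\leq\delta$; making $r$ and $\epsilon$ small does nothing to keep $Y$ close to $\varphi$, so your claim that the drift is bounded below by $\eta/2$ on $[\tau^{\alpha},\theta^{\alpha}]$ is unjustified. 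The paper introduces the additional stopping time $\theta_2^{\alpha}:=\inf\{s\geq\tau^{\alpha}:|Y(s)-\varphi(s,X(s))|\geq\delta\}$, works on $[\tau^{\alpha},\theta_1^{\alpha}\wedge\theta_2^{\alpha}]$, and then shows that at $\theta_2^{\alpha}$ one necessarily has $Y-\varphi=+\delta$ (by monotonicity of $Z$), whence $Y>w$ because $\delta$ was chosen with $\varphi>w-\delta$ on the small ball, allowing the switch to the $w^{*}$-strategy. Your $\theta^{\alpha}$ only tracks $X$ and the relation $\varphi^{\epsilon}\geq w^{*}$, so this mechanism is absent. The analogous second stopping time is also needed (and used in the paper) in the super-solution part to keep the Girsanov density and the super-martingale argument under control.
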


\begin{pf}
\textit{Step \textup{1} \textup{(}$v^+$ is the viscosity sub-solution\textup{)}}. First due to
Proposition~\ref{prop:Upnemty} $v^+$ is well defined. We will first
show the interior viscosity sub-solution property and then demonstrate
the boundary condition.

\textit{Step \textup{1.1.} The interior sub-solution property}:
Let $(t_0,x_0)$ be in the parabolic interior $[0,T)\times\mathbb{R}^d$
such that a smooth function $\varphi$ strictly touches $v^+$ from above
at $(t_0,x_0)$. Assume, by contradiction, that
\[
\varphi_t +H\bigl(t,x,\varphi, D\varphi, D^2\varphi
\bigr)<0 \qquad\mbox{at }(t_0, x_0).
\]
From the uniform continuity of $\mu_{X}$ and $\sigma_{X}$ in
Assumption~\ref{Assump: drift_and_volatility}, the uniform continuity
of $\mu
_{Y}^{\hat u}$ in Assumption~\ref{ass: regu muY_hatu} and the
smoothness of $\varphi$, the map $(t,x,y,a)\rightarrow-\mu^{\hat
u}_{Y}(t,x,y,\sigma_{X}(t,x,a)D\varphi(t,x),a)+\mu_{X}(t,x,a)^{\top}
D\vp
+ \frac{1}2 \operatorname{Tr}[\sigma_{X}\times\break \sigma_{X}^{\top}(t,x,a) D^2\vp
(t,x)]$ is
uniformly continuous in $(t,x,y)$. Hence the map
$(t,\break x,y)\rightarrow H(t,x,y,D\varphi(t,x),D^2\varphi(t,x))$ is
continuous due to Lemma~\ref{lemma: continuity of taking sup}. This
implies that there exists a $\eps>0$ and $\delta>0$ such that
%
\begin{eqnarray}
\label{eq: sub_solution_locally_contradicted} \varphi_t +H\bigl
(t,x,y,D\varphi, D^2
\varphi\bigr)<0
\nonumber
\\[-8pt]
\\[-8pt]
\eqntext{\forall(t,x)\in\overline{B(t_0, x_0,
\eps)} \mbox{ and } \bigl|y-\vp(t,x)\bigr|\leq\delta,}
\end{eqnarray}
where $B(t_0,x_0,\eps)=\{(t,x)\in\D\dvtx\max\{|t-t_0|,|x-x_0|\}
<\eps\}
$. Now, on the compact torus $\mathbb{T}= \overline{B(t_0, x_0, \eps)}-
B(t_0, x_0, \eps/2)$, we have that $\varphi>v^+$, and the min of
$\varphi-v^+$ is attained since $v^{+}$ is USC. Therefore, $\varphi
>v^+ +\eta$ on $\mathbb{T}$ for some $\eta>0$. Since $w_n\searrow
v^+$, a Dini-type argument shows that for large enough $n$, we have
$\varphi>w_n+\eta/2$ on $\mathbb{T}$ and $\varphi>w_n-\delta$ on
$\overline{B(t_0, x_0, \eps/2)}$. For simplicity, fix such an $n$, and
denote $w=w_n$. Now define, for small $\kappa<\frac{\eta}{2} \wedge
\delta$,
\[
w^{\kappa}\triangleq\cases{ %
(\varphi-\kappa)
\wedge w,&\quad $\mbox{on } \overline{B(t_0, x_0, \eps)},$
\vspace*{2pt}\cr
w,&\quad $\mbox{outside } \overline{B(t_0, x_0, \eps)}.$
}
\]
Since $\vp>w+\eta/2>w+\kappa$ on $\mathbb{T}$, then $w=w^{\kappa}$ on
$\partial B(t_0,x_0;\eps/2)$, which implies $w^{\kappa}$ is continuous.
Since $w^{\kappa}(t_0,x_0)<v^+(t_0,x_0)$, we would obtain a
contradiction if we can show $w^{\kappa}\in\mathcal{U}^+$.

Fix $t$, $\{\tau^{\alpha}\} \in\mathfrak{S}^t $ and $\mathfrak
{u}\in\mfU(t)$.
We need to construct a strategy $\tilde{\mathfrak{u}}\in\mfU(t,\{
\tau^{\alpha
}\})$ in the definition of stochastic super-solutions for $w^{\kappa}$.
This can be done as follows:
since $w$ is a stochastic super-solution, there exists an ``optimal''
strategy $\tilde{\mathfrak{u}}_1$ in Definition~\ref{def:
Stochasticsuper-solution} for $w$ starting at $\{\tau^{\alpha}\}$. We
will construct $\tilde{\mathfrak{u}}$ in two steps:

\begin{longlist}[(ii)]
\item[(i)] $w^{\kappa}(\tau^{\alpha},X_{t,x}^{\alpha}(\tau
^{\alpha
}))= w(\tau^{\alpha},X_{t,x}^{\alpha}(\tau^{\alpha}))$: set
$\tilde{\mathfrak{u}
}=\tilde{\mathfrak{u}}_1$;

\item[(ii)] $w^{\kappa}(\tau^{\alpha},X_{t,x}^{\alpha}(\tau
^{\alpha
}))< w(\tau^{\alpha},X_{t,x}^{\alpha}(\tau^{\alpha}))$: In this
case we
necessarily start inside the ball.
Let $\overline{Y}$ be the unique strong solution (which is thanks in
particular to Assumption~\ref{ass: regu muY_hatu}) of the equation
\begin{eqnarray*}
\overline{Y}(l)&=&Y^{\mathfrak{u},\alpha}_{t,x,y}\bigl(\tau^\alpha\bigr)\\
&&{}+\int_{\tau^\alpha
}^{\tau^\alpha\vee l}
\mu_{Y}^{\hat{u}} \bigl(s,X_{t,x}^{\alpha
}(s),
\overline{Y}(s), \sigma_{X}\bigl(s,X_{t,x}^{\alpha}(s),
\alpha_{s}\bigr)D\vp\bigl(s,X_{t,x}^{\alpha}(s)
\bigr),\alpha_{s} \bigr) \,ds
\\
&&{}+ \int_{\tau^\alpha}^{\tau^\alpha\vee l} \sigma_{X}
\bigl(s,X_{t,x}^{\alpha
}(s),\alpha_{s}\bigr)D\vp
\bigl(s,X_{t,x}^{\alpha}(s)\bigr)\,dW_{s},\qquad l \geq\tau
^{\alpha},
\end{eqnarray*}
for any $\mathfrak{u}\in\mfU(t)$ and $\alpha\in\Ac^t$, and set
$\overline
{Y}(s)=Y_{t,x,y}^{\mathfrak{u}, \alpha}(s) $ for $s <\tau^{\alpha
}$. Define
\[
\tilde{\mathfrak{u}}_0:=\tilde{\mathfrak{u}}_0[
\alpha](s)=\hat{u}\bigl(s,X_{t,x}^{\alpha
}(s),\overline{Y}(s),
\sigma_{X}\bigl(s,X_{t,x}^{\alpha}(s),\alpha
_s\bigr)D\varphi\bigl(s,X_{t,x}^{\alpha}(s)\bigr),
\alpha_s\bigr).
\]
Let $\theta_{1}^{\alpha}$ is the first exit time of
$(s,X_{t,x}^{\alpha
}(s))$ after $\tau^{\alpha}$ from $ B(t_0, x_0; \eps/2)$ and $\theta
_2^{\alpha}$ be the first time after $\tau^{\alpha}$ when
$|\overline
{Y}(s)-\varphi(s,X_{t,x}^{\alpha}(s))| \geq\delta$. More precisely,
\[
\theta_{1}^{\alpha}:=\inf\bigl\{s \in\bigl[
\tau^{\alpha},T\bigr]\dvtx\bigl(s, X_{t,x}^{\alpha}(s)
\bigr) \notin B(t_0, x_0, \eps/2) \bigr\}
\]
and
\[
\theta_2^{\alpha}:=\inf\bigl\{s \in\bigl[
\tau^{\alpha},T\bigr]\dvtx\bigl\llvert\overline{Y}(s)-\varphi
\bigl(s,X_{t,x}^{\alpha}(s)\bigr)\bigr\rrvert\geq\delta\bigr\}.
\]
Let $\theta^{\alpha}=\theta_1^{\alpha} \wedge\theta_2^{\alpha}$. We
know that $\{\theta^{\alpha}\} \in\mathfrak{S}^t$ from Example~1 in
\cite{Bayraktar_Huang}. We will set $\tilde{\mathfrak{u}}$ to be
$\tilde{\mathfrak{u}}_0$
until $\theta^{\alpha}$. Starting at $\theta^{\alpha}$, we will then
follow the strategy $\mathfrak{u}^{\theta} \in\mfU(t,\{\theta
^{\alpha}\})$
which is ``optimal'' for $w$.
\end{longlist}

In summary, (i) and (ii) together give us the following strategy:
\[
\tilde{\mathfrak{u}}[\alpha]= \bigl(\mathbh{1}_{A} \tilde{
\mathfrak{u}}_1[\alpha]+\mathbh{1}_{A^c}\bigl(\tilde{
\mathfrak{u}}_0[\alpha] \mathbh{1}_{[t,\theta
^{\alpha})} +
\mathfrak{u}^{\theta}[\alpha]\mathbh{1}_{[\theta
^{\alpha
},T]}\bigr) \bigr)
\mathbh{1}_{[\tau^{\alpha}, T]},
\]
where
\[
A=\bigl\{ w^{\kappa}\bigl(\tau^{\alpha},X_{t,x}^{\alpha}
\bigl(\tau^{\alpha}\bigr)\bigr)= w\bigl(\tau^{\alpha
},X_{t,x}^{\alpha}
\bigl(\tau^{\alpha}\bigr)\bigr)\bigr\}.
\]
We note that $\tilde{\mathfrak{u}}_0 \in\mfU(t)$ by the pathwise
uniqueness of
$X$'s, $Y$'s and $\overline{Y}$'s equations. Then applying Lemma~\ref
{lemma: justification of concatenation of strategies}, $\tilde
{\mathfrak{u}
}_0[\alpha]\mathbh{1}_{[t,\theta^{\alpha})} + \mathfrak{u}^{\theta
}[\alpha
]\mathbh{1}_{[\theta^{\alpha},T]} \in\mfU(t) $.
Since $\tilde{\mathfrak{u}}_1 \in\mfU(t,\{\tau^{\alpha}\})$, by
Definition~\ref
{def:strategies at a family of stopping times}, it follows that $\tilde
{\mathfrak{u}}\in\mfU(t,\{\tau^{\alpha}\})$ by the pathwise
uniqueness of
$X$'s equation.
Now, let us show the above construction actually works. We need to show
that for any $\rho\in\mathbb{S}^t$ such that $\tau^{\alpha}\leq
\rho
\leq T$,
\[
Y(\rho)\geq w\bigl(\rho, X(\rho)\bigr)\qquad \mathbb{P}\mbox{-a.s.}
\mbox{ on } \bigl
\{Y\bigl(\tau^{\alpha}\bigr)> w\bigl(\tau^{\alpha}, X\bigl(\tau^{\alpha}\bigr)\bigr)\bigr\},
\]
where
\[
X:= X_{t,x}^{\alpha} \quad\mbox{and}\quad Y:=Y_{t,x,y}^{\mathfrak{u}\otimes
_{\tau
^{\alpha}}\tilde{\mathfrak{u}}[\alpha],\alpha}.
\]
Note that $\overline{Y}(s)= Y_{t,x,y}^{\mathfrak{u}\otimes_{\tau
^{\alpha}}\tilde
{\mathfrak{u}}_0[\alpha],\alpha}(s)
$ for $s\geq\tau^{\alpha}$ and
%
\begin{equation}
\label{eq: process_Y_on_two_disjoint_sets} Y=\mathbh{1}_A
Y_{t,x,y}^{\mathfrak{u}\otimes_{\tau^{\alpha
}}\tilde{\mathfrak{u}
}_1[\alpha],\alpha} +
\mathbh{1}_{A^c} Y_{t,x,y}^{\mathfrak
{u}\otimes_{\tau
^{\alpha}}\tilde{\mathfrak{u}}_0[\alpha],\alpha}\qquad \mbox{for } \tau
^{\alpha}\leq s \leq\theta^{\alpha}.
\end{equation}
We will carry out the proof in two steps:

\textit{\textup{(i)} On the set $A\cap\{Y(\tau^{\alpha})>w^{\kappa}(\tau
^{\alpha},X(\tau^{\alpha}))\}$}, we have
\[
Y\bigl(\tau^{\alpha}\bigr)>w\bigl(\tau^{\alpha},X\bigl(\tau^{\alpha}\bigr)\bigr).
\]
From \eqref{eq: process_Y_on_two_disjoint_sets} and the ``optimality''
of $\tilde{\mathfrak{u}}_1$ (for $w$), we know
\[
Y(\rho)=Y_{t,x,y}^{\mathfrak{u}\otimes_{\tau^{\alpha}}\tilde
{\mathfrak{u}}_1[\alpha
],\alpha}(\rho)\geq w\bigl(\rho,X(\rho)\bigr)\geq
w^{\kappa}\bigl(\rho,X(\rho)\bigr)\qquad\hspace*{-1pt} \mathbb{P}\mbox{-a.s} \mbox
{ on the above
set}.
\]

\textit{\textup{(ii)} On the set $A^c \cap\{Y(\tau^{\alpha})>w^{\kappa
}(\tau
^{\alpha},X(\tau^{\alpha}))\}$}, by the definition of $\tilde
{\mathfrak{u}}_0$
and \eqref{eq: process_Y_on_two_disjoint_sets}, using It\^{o}'s formula,
\[
Y\bigl(\cdot\wedge\theta^{\alpha}\bigr)- \vp\bigl(\cdot\wedge
\theta^{\alpha
},X\bigl(\cdot\wedge\theta^{\alpha}\bigr)\bigr)= Y
\bigl(\tau^{\alpha}\bigr)- \vp\bigl(\tau^{\alpha
},X\bigl(\tau
^{\alpha}\bigr)\bigr)+\int_{\tau^{\alpha}}^{\cdot\wedge\theta
^{\alpha}} \gamma
(s) \,ds,
\]
where
\begin{eqnarray*}
\gamma&:=&\mu_{Y}^{\hat{u}}\bigl(\cdot,X,Y,\sigma_{X}(\cdot,X,\alpha)D\varphi(\cdot,X),\alpha\bigr)-\mu_{X}(\cdot,X,
\alpha)^{\top} D\varphi(\cdot,X) \\
&&{}- \tfrac{1}2 \operatorname{Tr}\bigl[
\sigma_{X}\sigma_{X}^{\top}(\cdot,X,\alpha
)D^2\varphi(\cdot,X)\bigr]-\varphi_t(\cdot,X),
\end{eqnarray*}
since the definition of $\hat{u}$ allows us to cancel the Brownian
motion terms on the right-hand side. On $[\tau^{\alpha},\theta
^{\alpha
}], (t,X)\in\overline{B(t_0, x_0, \eps)} \mbox{ and } |Y(t)-\vp
(t,X(t))|\le\delta$, therefore from (\ref{eq:
sub_solution_locally_contradicted}) we have that $\gamma>0$. This
implies that $Y(\cdot\wedge\theta^{\alpha})- \vp(\cdot\wedge
\theta
^{\alpha},X(\cdot\wedge\theta^{\alpha}))$ is nondecreasing on
$[\tau
^{\alpha},T]$ and
%
\begin{equation}
\label{eq: increasing_process_tau_theta} Y\bigl(\theta^{\alpha
}\bigr)- \vp\bigl(\theta^{\alpha},X
\bigl(\theta^{\alpha
}\bigr)\bigr)+\kappa>Y\bigl(\tau^{\alpha}\bigr)-
\vp\bigl(\tau^{\alpha},X\bigl(\tau^{\alpha}\bigr)\bigr)+\kappa>0.
\end{equation}
As a result, on the one hand, we have
%
\begin{eqnarray}
\label{eq:theta1_less_theta2} 0<\bigl(Y\bigl(\theta_1^{\alpha
}\bigr)- \vp
\bigl(\theta_1^{\alpha},X\bigl(\theta_1^{\alpha
}
\bigr)\bigr)+\kappa\bigr)\leq\bigl(Y\bigl(\theta_1^{\alpha}
\bigr)- w\bigl(\theta_1^{\alpha
},X\bigl(\theta
_1^{\alpha}\bigr)\bigr)\bigr)
\nonumber
\\[-8pt]
\\[-8pt]
 \eqntext{\mbox{on } \bigl\{
\theta_1^{\alpha}<\theta_2^{\alpha}\bigr\}.}
\end{eqnarray}
On the other hand,
\[
Y\bigl(\theta_2^{\alpha}\bigr)-\vp\bigl(\theta_2^{\alpha},X
\bigl(\theta_2^{\alpha}\bigr)\bigr) = \delta\qquad\mbox{on } \bigl\{
\theta_1^{\alpha}\geq\theta_2^{\alpha}\bigr
\}.
\]
Observe that the right-hand side of the above expression cannot be
$-\delta$ due to \eqref{eq: increasing_process_tau_theta}. Therefore,
%
\begin{eqnarray}
\label{eq:theta1_greater_theta2} \bigl(Y\bigl(\theta_2^{\alpha
}\bigr)- w\bigl(\theta_2^{\alpha},X\bigl(\theta_2^{\alpha
}
\bigr)\bigr)\bigr)=\bigl(\delta+\varphi\bigl(\theta_2^{\alpha},X
\bigl(\theta_2^{\alpha
}\bigr)\bigr)-w\bigl(\theta
_2^{\alpha},X\bigl(\theta_2^{\alpha}\bigr)
\bigr)\bigr)>0
\nonumber
\\[-8pt]
\\[-8pt]
\eqntext{\mbox{on } \bigl\{\theta_1^{\alpha}\geq
\theta_2^{\alpha}\bigr\},}
\end{eqnarray}
since $\varphi>w-\delta$ on $\overline{B(t_0, x_0, \eps/2)}$. Combining
\eqref{eq:theta1_less_theta2} and \eqref{eq:theta1_greater_theta2} we obtain
%
\begin{equation}
\label{eq:aits} Y\bigl(\theta^{\alpha}\bigr)- w\bigl(\theta
^{\alpha},X
\bigl(\theta^{\alpha}\bigr)\bigr)>0 \qquad\mbox{on } A^c \cap\bigl\{Y
\bigl(\tau^{\alpha}\bigr)>w^{\kappa}\bigl(\tau^{\alpha
},X^{\alpha}
\bigr)\bigr\}.
\end{equation}
It follows from this conclusion and the ``optimality'' of $\mathfrak
{u}^{\theta
}$ starting at $\{\theta^{\alpha}\}$ that
\[
\bigl(Y\bigl(\rho\vee\theta^{\alpha}\bigr)- w^{\kappa}\bigl
(\rho
\vee\theta^{\alpha
},X\bigl(\rho\vee\theta^{\alpha}\bigr)\bigr) \bigr)
\geq\bigl(Y\bigl(\rho\vee\theta^{\alpha}\bigr)- w\bigl(\rho
\vee
\theta^{\alpha},X\bigl(\rho\vee\theta^{\alpha}\bigr)\bigr)
\bigr)
\geq0,
\]
on $A^c \cap\{Y(\tau^{\alpha})>w^{\kappa}(\tau^{\alpha},X^{\alpha
})\}$.

Also, since $ Y(\cdot\wedge\theta^{\alpha})- \vp(\cdot\wedge
\theta
^{\alpha},X(\cdot\wedge\theta^{\alpha}))$ is nondecreasing on
$[\tau
^{\alpha},T]$ it follows that $ (Y(\rho\wedge\theta^{\alpha
})-\vp
(\rho\wedge\theta^{\alpha},X(\rho\wedge\theta^{\alpha}))+\kappa
)>0$, which further gives
%
\begin{eqnarray}
\label{eq:eans} \bigl(Y\bigl(\rho\wedge\theta^{\alpha}\bigr
)-w^{\kappa}
\bigl(\rho\wedge\theta^{\alpha
},X\bigl(\rho\wedge\theta^{\alpha}
\bigr)\bigr) \bigr)>0
\nonumber
\\[-8pt]
\\[-8pt]
\eqntext{\mbox{on } A^c \cap\bigl\{ Y\bigl(\tau^{\alpha}\bigr)>w^{\kappa}\bigl(\tau^{\alpha},X^{\alpha}
\bigr)\bigr\}.}
\end{eqnarray}
From \eqref{eq:aits} and \eqref{eq:eans} we have
\[
Y(\rho)-w^{\kappa}\bigl(\rho,X(\rho)\bigr)\geq0 \qquad\mbox{on } A^c
\cap\bigl\{ Y\bigl(\tau^{\alpha}\bigr)>w^{\kappa}\bigl(\tau^{\alpha},X^{\alpha}\bigr)\bigr\}.
\]

\textit{Step \textup{1.2}. The boundary condition}:

\emph{Step} A: In this step we will assume that\vspace*{2pt} $\mu
_{Y}^{\hat
u}$ is nondecreasing in its $y$-variable. Assume on the contrary that
for some $x_{0} \in\mathbb{R}^d$, we have
%
\begin{equation}
\label{eq: step1.2_contra at the boundary} v^{+}(T,x_{0})>g(x_{0}).
\end{equation}
Since $g$ is USC, then from (\ref{eq: step1.2_contra at the boundary})
there exists $\eps>0$ such that
%
\begin{equation}
\label{eq: step1.2_contra at the boundary conclusion}
v^{+}(T,x_{0})>g(x)+\eps\qquad\mbox{for }
|x-x_0|\leq\eps.
\end{equation}
Choose $\varepsilon$ such that $\varepsilon<1$. Since $v^{+}$ is USC,
then $v^{+}$ is bounded above on the compact (rectangular) torus
$\mathbb{T}=\overline{B(T,x_{0};\eps)}-B(T,x_{0};\eps/2)$, where
$B(T,x_{0};\eps)=\{(t,x)\in\D\dvtx\max{\{|T-t|,|x-x_{0}|\}} < \eps
\}$.
Choose $\beta>0$ small enough, such that
\[
v^{+}(T,x_{0})+\frac{\eps^2}{4\beta}>\eps+\sup
_{\mathbb{T}}v^{+}(t,x).
\]
By a Dini-type argument there exists a $w\in\Uc^{+}$ such that
%
\begin{equation}
\label{Eq: 1.2_Dini_Argument} v^{+}(T,x_{0})+\frac{\eps^2}{4\beta
}>\eps+\sup
_{\mathbb{T}}w(t,x).
\end{equation}
For $C>0$ let us denote
\[
\varphi^{\beta,C}(t,x)=v^{+}(T,x_0)+
\frac{|x-x_0|^2}{\beta}+C (T-t).
\]
Hence, $D\vp^{\beta,C}(t,x)=\frac{2(x-x_0)}{\beta}$ and $D^2\vp
^{\beta
,C}(t,x)=\frac{2}{\beta}I_{d\times d}$. From Assumption~\ref{eq: cond
mu sigma},
%
\begin{eqnarray}
\label{eq: boundedness_1_v+_boundary} \bigl\llvert\mu
_{X}(t,x,a)^{\top} D
\vp^{\beta,C}(t,x)\bigr\rrvert\leq2K\frac
{|x-x_0|}{\beta}\leq
\frac{2K}{\beta}
\nonumber
\\[-8pt]
\\[-8pt]
\eqntext{\mbox{for } (t,x)\in\overline{B(T,x_{0};\eps)}
\mbox{ and } a\in A,}
\end{eqnarray}
where we use $\varepsilon<1$. Similarly,
%
\begin{eqnarray}
\label{eq: boundedness_2_v+_boundary} \biggl\llvert\frac
{1}{2}\operatorname{Tr} \bigl[
\sigma_{X}\sigma_{X}^{\top
}(t,x,a)D^{2}
\vp^{\beta,C}(t,x) \bigr]\biggr\rrvert\leq\frac
{1}{2}K^2
\frac
{2d}{\beta}=\frac{K^2d}{\beta}
\nonumber
\\[-8pt]
\\[-8pt]
\eqntext{\mbox{for } (t,x)\in\overline{B(T,x_{0};
\eps)} \mbox{ and } a\in A,}
\end{eqnarray}
where $d$ is the dimension of the space where the variable $x$ lives.
From the linear growth condition of $\mu_Y^{\hat u}$ in
Assumption~\ref
{ass: regu muY_hatu}, there exists a $L>0$, such that
%
\begin{eqnarray}
\label{eq: boundedness_3_v+_boundary} %
&&-\mu^{\hat u}_{Y}
\bigl(t,x,\vp^{\beta,0}-\eps,\sigma_{X}(t,x,a)D\vp
^{\beta
,0},a\bigr)\nonumber\\
&&\qquad\leq L \bigl(1+\bigl\llvert\vp^{\beta,0}(t,x)-\eps
\bigr\rrvert+\bigl\llvert\sigma_{X}(t,x,a)D\vp^{\beta,0}(t,x)
\bigr\rrvert\bigr)
\nonumber
\\[-8pt]
\\[-8pt]
\nonumber
&&\qquad\leq L \bigl(1+ v^+(T,x_0)+1/\beta+1+2K/\beta\bigr)\\
\eqntext{\mbox{for }
(t,x)\in\overline{B(T,x_{0};\eps)} \mbox{ and } a\in A.}
\end{eqnarray}
Noting that $D\vp^{\beta,C}(t,x)=D\vp^{\beta,0}(t,x)$, from the
monotonicity assumption of $\mu_Y^{\hat u}$, we have
\begin{eqnarray*}
&&-\mu^{\hat u}_{Y}\bigl(t,x,\vp^{\beta,C}-\eps,
\sigma_{X}(t,x,a)D\vp^{\beta
,C},a\bigr)\\
&&\qquad\leq-
\mu^{\hat u}_{Y}\bigl(t,x,\vp^{\beta,0}-\eps,\sigma
_{X}(t,x,a)D\vp^{\beta,0},a\bigr).
\end{eqnarray*}
The above equation, together with \eqref{eq:
boundedness_1_v+_boundary}, \eqref{eq: boundedness_2_v+_boundary} and
\eqref{eq: boundedness_3_v+_boundary}, implies that $H(\cdot,  \vp
^{\beta
,C}-\eps, D\vp^{\beta,C},D^{2}\vp^{\beta,C})(t,x)$ is bounded from
above on $\overline{B(T,x_{0};\eps)}$, and the bound is independent of
$C$. As a result for a large enough $C$ we have that
%
\begin{eqnarray}
\label{eq: contra 2_proof1.2_stepA} \vp_{t}^{\beta,C}+H\bigl(\cdot,y,D
\vp^{\beta,C},D^{2}\vp^{\beta,C}\bigr) (t,x) < 0
\nonumber
\\[-8pt]
\\[-8pt]
\eqntext{\forall
(t,x)\in B(T,x_{0};\eps) \mbox{ and } y\ge\vp^{\beta,C}(t,x)-
\eps,}
\end{eqnarray}
where we used the monotonicity assumption of $\mu_{Y}^{\hat u}$.
Making sure that $C \geq\eps/2 \beta$, we obtain from \eqref{Eq:
1.2_Dini_Argument} that
\[
\vp^{\beta,C}\geq\eps+w \qquad\mbox{on } \mathbb{T}.
\]
Also,
%
\begin{equation}
\label{eq: comparison_varphi_and_g} \vp^{\beta, C}(T,x)\geq
v^{+}(T,x_0)>g(x)+
\eps\qquad\mbox{for } |x-x_{0}|\leq\eps.
\end{equation}
Now we can choose $\kappa<\eps$ and define
%
\begin{equation}
\label{eq: w_kappa_1.2} w^{\beta,C,\kappa}\triangleq\cases{ %
 \bigl(\vp^{\beta,C} -\kappa\bigr)\wedge w,&\quad $\mbox{on } \overline{ B(T,
x_{0}, \eps)},$
\vspace*{2pt}\cr
w,&\quad $\mbox{outside } \overline{ B(T, x_0, \eps)}.$}
\end{equation}

From \eqref{eq: comparison_varphi_and_g} and \eqref{eq: w_kappa_1.2} it
is easy to see that $w^{\beta,C,\kappa}(T,x)\geq g(x)$. By applying
similar arguments as in step 1.1, we can show that $w^{\beta,C,\kappa}$
is a stochastic super-solution with $ w^{\beta,C,\kappa
}(T,x_{0})<v^{+}(T,x_{0}) $. This contradicts the definition of
$v^{+}$.

\emph{Step} B: We now turn to showing the same result for more
general $\mu_{Y}^{\hat u}$ and follow a proof similar to that in \cite
{Bouchard_Nutz_TargetGames}. Fix $c>0$, and define $\widetilde
Y_{t,x,y}^{\mathfrak{u},\alpha}$ as the strong solution of
\[
d\widetilde{Y}(s)=\tilde\mu_{Y}\bigl(s,X_{t,x}^{\alpha}(s),
\widetilde{Y}(s),\mathfrak{u}[\alpha]_{s}, \alpha_{s}\bigr)
\,ds +\tilde\sigma_{Y}\bigl(s,X_{t,x}^{\alpha}(s),
\widetilde{Y}(s),\mathfrak{u}[\alpha]_{s},\alpha_{s}
\bigr)\,dW_{s}
\]
with initial data $\widetilde{Y}(t)=y$, where
\begin{eqnarray*}
\tilde\mu_{Y}(t,x,y,u,a)&:=& c y+e^{ct}
\mu_{Y}\bigl(t,x,e^{-c t}y,u,a\bigr),
\\
\tilde\sigma_{Y}(t,x,y,u,a)&:=&e^{c t}
\sigma_{Y}\bigl(t,x,e^{-c t} y,u,a\bigr).
\end{eqnarray*}
Hence, $\widetilde{Y}_{t,x,y}^{\mathfrak{u},\alpha
}(s)e^{-cs}=Y_{t,x,ye^{-ct}}^{\mathfrak{u},\alpha}(s)$ for any $s\in
[t,T]$ by
the strong uniqueness. Set $\widetilde{g}(x):=e^{c T} g(x)$, and define
\[
\tilde v(t,x):= \inf\bigl\{y\in\R\dvtx\exists\mathfrak{u}\in
\mfU^{t}
\mbox{ s.t. } \widetilde{Y}^{\mathfrak{u},\alpha}_{t,x,y}(T)\ge
\widetilde g
\bigl(X^{\alpha
}_{t,x}(T)\bigr) \mbox{-a.s. }\ \forall\alpha\in
\Ac^{t}\bigr\}.
\]
Therefore, $\tilde{v}(t,x)=e^{ct}v(t,x)$. Since $\mu_{Y}^{\hat u}$ has
linear growth in its second argument~$y$, one can choose large enough
$c>0$ so that
%
\begin{equation}
\label{eq: muY_uhat_tilde} \tilde\mu_{Y}^{\hat u}\dvtx(t,x,y,z,a)
\mapsto
c y + e^{c t}\mu_{Y}^{\hat u}\bigl(t,x,e^{-c t}y,e^{-c t}z,a
\bigr)
\end{equation}
is nondecreasing in its $y$-variable. This means that these dynamics
satisfy the monotonicity assumption used in step~A above. Moreover, all
the assumptions needed to apply step A to this new problem are also satisfied.
Let
%
\begin{eqnarray}
\label{tilde_H} %
&& \widetilde{H}(t,x,y,p,M)\nonumber\\
&&\qquad:=
\sup_{a\in A} \biggl\{-cy-e^{ct}\mu^{\tilde u}_{Y}
\bigl(t,x,e^{-c t}y,e^{-c
t}\sigma_{X}(t,x,a)p,a\bigr)
\\
&&\hspace*{70pt}{}+\mu_{X}(t,x,a)^{\top} p + \frac{1}2\operatorname{Tr}
\bigl[\sigma_{X}\sigma_{X}^{\top}(t,x,a)M \bigr]
\biggr\},\nonumber
\end{eqnarray}
where $\tilde{u}$ is defined like $\hat{u}$ but now in terms of
$\tilde\sigma_{Y}$. We will denote by
$\widetilde{ \mathcal{U}}^+$ be the set of stochastic super-solutions of
%
\begin{eqnarray}
\label{eq: scale_sde} %
\varphi_t+
\widetilde{H}\bigl(\cdot,\vp,D\vp,D^{2}\vp\bigr)&=&0 \qquad\mbox{on }
\mathcal{D}_{<T},
\nonumber
\\[-8pt]
\\[-8pt]
\nonumber
\vp&=&\widetilde g\qquad\mbox{on } \mathcal{D}_{T}
\end{eqnarray}
and $\tilde{v}^+(t,x):=\inf_{w\in\widetilde{ \mathcal{U}}^+ } w(t,x)$.

From step A, we know that $\tilde{v}^+$ is a viscosity sub-solution of
the above PDE. Since any function $w(t,x)$ is a stochastic
super-solution of (\ref{HJB equation}) if and only if $\tilde
{w}(t,x)=e^{ct}w(t,x)$ is a stochastic super-solution of \eqref{eq:
scale_sde}, it follows that $\tilde{v}^+(t,x)=e^{ct}v^+(t,x)$. Now it
is easy to conclude that $v^{+}$ is a viscosity sub-solution of (\ref
{HJB equation}).

\textit{Step \textup{2} \textup{(}$v^{-}$ is the viscosity super-solution\textup{)}.}
Due to Assumption~\ref{ass:colunepty}, $v^-$ is well defined. Next we
will show that it satisfies the interior viscosity super-solution
property followed by the boundary condition.

\textit{Step \textup{2.1}. The interior super-solution property}: Let $(t_0,x_0)$
in the parabolic interior $[0,T)\times\mathbb{R}^d$ such that a smooth
function $\varphi$ strictly touches $v^-$ from below at $(t_0,x_0)$.
Assume by contradiction that
\[
\varphi_t +H\bigl(\cdot,\varphi, D\varphi, D^2\varphi
\bigr)>0 \qquad\mbox{at } (t_0, x_0).
\]
Hence there exists $a_0 \in A$, such that
%
\begin{equation}
\label{eq:a_0} \varphi_t +H^{u_0,a_0}\bigl(\cdot,\varphi, D
\varphi, D^2\varphi\bigr)>0\qquad \mbox{at } (t_0,
x_0),
\end{equation}
where $u_0=\hat{u}(t_0,x_0,\varphi(t_0,x_0),\sigma_X(t_0, x_0,
a_0)D\varphi(t_0,x_0), D^2\varphi(t_0,x_0))$
and
%
\begin{eqnarray}
\label{eq: H^{u,a}} &&H^{u,a}(t,x,y,p,M)
\nonumber
\\[-8pt]
\\[-8pt]
\nonumber
&&\qquad:=-\mu_{Y}(t,x,y,u,a)+
\mu_{X}(t,x,a)^{\top} p + \tfrac{1}2\operatorname{Tr}
\bigl[\sigma_{X}\sigma_{X}^{\top}(t,x,a)M \bigr].
\end{eqnarray}
From the continuity assumption on the coefficients in Assumption~\ref
{Assump: drift_and_volatility} and the continuity of $\hat u$ in
Assumption~\ref{ass: def hat u + regu}, there exists $\eps, \delta>0$
such that
%
%
%
\begin{eqnarray*}
&&\varphi_t +H^{u,a_0}\bigl(\cdot,y, D\varphi, D^2
\varphi\bigr)>0 
\qquad \forall(t,x)\in\overline{B(t_0,
x_0, \eps)}\\
&&\qquad \mbox{and } (y,u) \in R\times U \mbox{ s.t. }\bigl |y-
\vp(t,x)\bigr|\le\delta\\
&&\qquad\mbox{and }\bigl \|\sigma_Y(t,x,y,u,a_0)-
\sigma_X(t,x,a_0)D\varphi(t,x)\bigr\| \leq\delta.
%
\end{eqnarray*}

Now, on the compact torus $\mathbb{T}= \overline{B(t_0, x_0, \eps)}-
B(t_0, x_0, \eps/2)$, we have that $\varphi<v^-$ and the max of
$\varphi-v^-$ is attained since $v^{-}$ is LSC. Therefore, $\varphi
+\eta< v^- $ on $\mathbb{T}$ for some $\eta>0$. Since $w_n\nearrow
v^-$, a Dini-type argument shows that for large enough~$n$, we have
$\varphi+ \eta/2< w_n$ on $\mathbb{T}$ and $\varphi<w_n+\delta$ on
$\overline{B(t_0, x_0, \eps/2)}$. For simplicity, fix such an $n$ and
denote $w=w_n$. Now define for small $\kappa<\frac{\eta}{2} \wedge
\delta$,
\[
w^{\kappa}\triangleq\cases{ %
(\varphi+\kappa)\vee
w,&\quad $\mbox{on } \overline{B(t_0, x_0, \eps)},$
\vspace*{2pt}\cr
w,&\quad $\mbox{outside } \overline{B(t_0, x_0, \eps)}.$
}
\]
Since $w^{\kappa}(t_0,x_0)>v^-(t_0,x_0)$, we obtain a contradiction if
we can show that $w^{\kappa}\in\mathcal{U}^-$.

In order to do so, fix $t$ and $\{\tau^{\alpha}\} \in\mathfrak{S}^t $.
For a given $\mathfrak{u}\in\mfU(t)$ and $\alpha\in\Ac^t$, we will
construct an ``optimal'' $\widetilde{\alpha} \in\Ac^t$ in the
definition of stochastic sub-solutions for $w^{\kappa}$. We will divide
the construction into two cases:

\begin{longlist}[(ii)]
\item[(i)] $ w(\tau^{\alpha},X(\tau^{\alpha}))= w^{\kappa}(\tau^{\alpha
},X(\tau
^{\alpha}))$: Since $w$ is a stochastic sub-solution, there exists an
$\widetilde{\alpha}_1$ for $w$ in the definition which is ``optimal''
for the nature given $\mathfrak{u}$, $\alpha$ and $\tau^{\alpha}$. Let
$\widetilde{\alpha}=\widetilde{\alpha}_1$.

\item[(ii)] $w(\tau^{\alpha},X(\tau^{\alpha})) < w^{\kappa}(\tau^{\alpha
},X(\tau^{\alpha}))$: Let
\[
\theta_{1}^{\alpha}:=\inf\bigl\{s \in\bigl[
\tau^{\alpha},T\bigr]\dvtx\bigl(s, X_{t,x}^{\alpha\otimes_{\tau
^{\alpha}} a_0 }(s)
\bigr) \notin B(t_0, x_0, \eps/2) \bigr\}
\]
and
\[
\theta_2^{\alpha}:=\inf\bigl\{s \in\bigl[
\tau^{\alpha},T\bigr]\dvtx\bigl\llvert Y_{t,x,y}^{\mathfrak{u},
\alpha\otimes_{\tau^{\alpha}} a_0
}(s)-
\varphi\bigl(s,X_{t,x}^{\alpha\otimes_{\tau^{\alpha}} a_0
}(s)\bigr)\bigr\rrvert\geq\delta
\bigr\},
\]
with the convention that $\inf\varnothing=T$. Denote
$\theta^{\alpha}=\theta_1^{\alpha} \wedge\theta_2^{\alpha}$.
Then let
$\widetilde{\alpha}=a_0$ until~$\theta^{\alpha}$. Starting from
$\theta
^{\alpha}$, choose $\widetilde{\alpha}=\alpha^{*}$, where the
latter is
``optimal'' for nature given $\alpha$ and $\mathfrak{u}$ this time onward.
\end{longlist}

In summary, the above construction yields a candidate ``optimal''
control for $w^{\kappa}$ given by
\[
\widetilde{\alpha}= \bigl(\mathbh{1}_{A} \widetilde{
\alpha}_1 +\mathbh{1}_{A^c} \bigl(a_0
\mathbh{1}_{[t,\theta^{\alpha})} + \alpha^{*}\mathbh{1}_{[\theta
^{\alpha},T]}
\bigr) \bigr)\mathbh{1}_{[\tau^{\alpha}, T]},
\]
where
\[
A=\bigl\{w\bigl(\tau^{\alpha},X_{t,x}^{\alpha}\bigl(\tau^{\alpha}\bigr)\bigr)=w^{\kappa
}\bigl(\tau^{\alpha},X_{t,x}^{\alpha}
\bigl(\tau^{\alpha}\bigr)\bigr)\bigr\}.
\]
Let us check that what we constructed actually works: Let us abbreviate
\[
(X,Y)=\bigl(X_{t,x}^{\alpha\otimes_{\tau^{\alpha}} \widetilde
{\alpha} }, Y_{t,x,y}^{\mathfrak{u}, \alpha\otimes_{\tau^{\alpha
}}\widetilde
{\alpha} }
\bigr).
\]
Note that
%
\begin{eqnarray}
\label{eq: XY_processes on two disjoint sets} %
X(s)&=&\mathbh{1}_{A}X_{t,x}^{\alpha\otimes_{\tau^{\alpha}}
\widetilde
{\alpha}_1 }(s)
+ \mathbh{1}_{A^c}X_{t,x}^{\alpha\otimes_{\tau
^{\alpha
}} a_0 }(s)\qquad \mbox{for }
\tau^{\alpha}\leq s \leq\theta^{\alpha
},
\nonumber
\\[-8pt]
\\[-8pt]
\nonumber
Y(s)&=&\mathbh{1}_{A}Y_{t,x,y}^{\mathfrak{u}, \alpha\otimes_{\tau
^{\alpha}}
\widetilde{\alpha}_1 }(s) +
\mathbh{1}_{A^c}Y_{t,x,y}^{\mathfrak
{u}, \alpha
\otimes_{\tau^{\alpha}} a_0 }(s) \qquad\mbox{for }
\tau^{\alpha}\leq s \leq\theta^{\alpha}. %
\end{eqnarray}
Again for brevity, let us introduce the following sets:
\begin{eqnarray*}
E&=&\bigl\{Y\bigl(\tau^{\alpha}\bigr)< w^{\kappa}\bigl(\tau^{\alpha},X\bigl(\tau^{\alpha
}\bigr)\bigr)\bigr\},\qquad
E_0=E\cap A, \qquad E_1=E\cap A^c,
\\
G&=&\bigl\{Y(\rho)< w^{\kappa}\bigl(\rho,X(\rho)\bigr)\bigr\}, \qquad G_0=\bigl
\{Y(\rho)< w\bigl(\rho,X(\rho)\bigr)\bigr\}.
\end{eqnarray*}
Observe that
\[
E=E_0\cup E_1, \qquad E_0\cap
E_1=\varnothing\quad \mbox{and}\quad  G_0\subset G.
\]

The proof will be complete if we can show that $P(G|B)>0$ for any
nonnull set $ B\subset E$. In fact, it suffices to show that $\mathbb
{P}(G\cap B)>0$.
Relying on the decomposition $\mathbb{P}(G\cap B)=\mathbb{P}(G\cap
B\cap E_0)+\mathbb{P}(G\cap B \cap E_1)$ (recall that $B\subset E$), we
will divide the proof into two steps:

(i) $\mathbb{P}(B\cap E_0)>0$: Directly from the way
$\widetilde{\alpha}_1$ is defined, the definition of the stochastic
sub-solutions and $B\cap E_0 \subset A$, we get
\[
\mathbb{P}(G_0|B\cap E_0)=\mathbb{P}
\bigl(Y_{t,x,y}^{\mathfrak{u},\alpha
\otimes_{\tau
^{\alpha}} \widetilde{\alpha}_1}(\rho)<w\bigl(\rho
,X_{t,x}^{\alpha
\otimes
_{\tau^{\alpha}} \widetilde{\alpha}_1}(\rho)\bigr)|B\cap E_0\bigr)>0.
\]
This further implies that $\mathbb{P}(G\cap B\cap E_0) \geq\mathbb
{P}(G_0\cap B\cap E_0)>0$.

(ii) $\mathbb{P}(B\cap E_1)>0$: From \eqref{eq: XY_processes
on two disjoint sets} and $B\cap E_1\subset A^c$,
\begin{eqnarray*}
&&\mathbb{P}\bigl(Y\bigl(\theta^{\alpha}\bigr)<w^{\kappa}\bigl(\theta^{\alpha},X\bigl(\theta^{\alpha}\bigr)\bigr)|B\cap
E_1\bigr)\\
&&\qquad=\mathbb{P}\bigl(Y_{t,x,y}^{\mathfrak{u}, \alpha
\otimes_{\tau
^{\alpha}} a_0}\bigl(\theta^{\alpha}\bigr)<w^{\kappa}\bigl(\theta^{\alpha
},X_{t,x}^{\alpha\otimes_{\tau^{\alpha}} a_0 }
\bigl(\theta^{\alpha}\bigr) \bigr)|B\cap E_1\bigr).
\end{eqnarray*}
The analysis in \cite{Bouchard_Nutz_TargetGames} shows that
\[
\Delta(s)=Y\bigl(s\wedge\theta^{\alpha}\bigr)- \bigl(\varphi
\bigl(s\wedge
\theta^{\alpha},X\bigl(s\wedge\theta^{\alpha}\bigr)\bigr
)+\kappa
\bigr)
\]
is a super-martingale up to a change of measure. We will summarize
these arguments here: Let
\begin{eqnarray*}
\lambda(s)&:=&\sigma_{Y}\bigl(s,X(s),Y(s),\mathfrak
{u}[a_{0}]_{s},a_{0}\bigr)-\sigma
_{X}\bigl(s,X(s),a_{0}\bigr)D\vp\bigl(s,X(s)\bigr),
\\
\beta(s)&:= &\bigl(\varphi_t\bigl(s,X(s)\bigr)+H^{\mathfrak
{u}[a_{0}]_{s},a_{0}}
\bigl(s,X(s), Y(s),D\vp\bigl(s,X(s)\bigr),D^{2}\vp\bigl
(s,X(s)\bigr)
\bigr) \bigr)\\
&&{}\times\bigl \|\lambda(s)\bigr\| ^{-2}\lambda(s) \mathbh{1}_{\{\|
\lambda(s)\|> \delta\}}.
\end{eqnarray*}
From the definition of $\theta^{\alpha}$ and the regularity
and growth conditions in Assumptions~\ref{Assump: drift_and_volatility}
and \ref{assump: relative_growth_condition_mu_to_sigma}, $\beta$ is
uniformly bounded on
$[\tau^{\alpha}, \theta^{\alpha}]$. This ensures that the positive
exponential local martingale $M$ defined by the SDE
\[
M(\cdot)=1+\int_{\tau^{\alpha}}^{\cdot\wedge\theta^{\alpha}}
M(s)\beta
_{s}^{\top} \,dW_s
\]
is a true martingale. An application of It\^{o}'s formula immediately
implies that $M\Delta$ is a local super-martingale. By the definition
of $\theta^{\alpha}$, $\Delta$ is bounded by $-\delta-\kappa$ from
below and by $\delta-\kappa$ from above on $[\tau^{\alpha},\theta
^{\alpha}]$. Therefore, $M\Delta$ is bounded above by a martingale
$2M\delta$, and below by another martingale $-2M\delta$. An
application of Fatou's lemma implies that $M\Delta$ is a super-martingale.

From the definition of $E_1$ and $w^{\kappa}$, $\Delta(\tau^{\alpha
})<0$ on $B\cap E_1$. The super-martingale property of $M\Delta$
implies that there exists a nonnull $H \subset B \cap E_1$, $H \in\Fc
^t_{\tau^{\alpha}}$ such that $\Delta(\theta^{\alpha}\wedge\rho
)<0$ on
$H$. Therefore, from the decomposition
\begin{eqnarray*}
\Delta\bigl(\theta^{\alpha}\wedge\rho\bigr)\mathbh{1}_{H}&=&
\bigl(Y\bigl(\theta_1^{\alpha}\bigr)-\bigl(\varphi\bigl(\theta_1^{\alpha},X\bigl(\theta_1^{\alpha
}
\bigr)\bigr)+\kappa\bigr) \bigr)\mathbh{1}_{H\cap\{\theta
_1^{\alpha}<\theta_2^{\alpha
}\wedge
\rho\}}\\
&&{}+\bigl(Y\bigl(\theta_2^{\alpha}\bigr)-\bigl(\varphi\bigl(\theta_2^{\alpha},X\bigl(\theta_2^{\alpha
}
\bigr)\bigr)+\kappa\bigr) \bigr)\mathbh{1}_{H\cap\{\theta
_2^{\alpha}\leq\theta
_1^{\alpha}\wedge\rho\}}\\
&&{}+ \bigl(Y(\rho)
-\bigl(\varphi\bigl(\rho,X(\rho)\bigr)+\kappa\bigr) \bigr)\mathbh
{1}_{H\cap\{\rho<\theta^{\alpha} \}},
\end{eqnarray*}
we see that
%
\begin{eqnarray}
\label{eqn_firstpart_of_delta} Y\bigl(\theta_1^{\alpha}\bigr
)-\bigl(\varphi
\bigl(\theta_1^{\alpha},X\bigl(\theta_1^{\alpha
}
\bigr)\bigr)+\kappa\bigr)&<&0 \qquad\mbox{on } H\cap\bigl\{\theta
_1^{\alpha}<
\theta_2^{\alpha}\wedge\rho\bigr\},
\\
\label{eqn_secondpart_of_delta} Y\bigl(\theta_2^{\alpha}\bigr
)-\bigl(\varphi
\bigl(\theta_2^{\alpha},X\bigl(\theta_2^{\alpha
}
\bigr)\bigr)+\kappa\bigr)&<&0\qquad \mbox{on } H\cap\bigl\{\theta
_2^{\alpha}
\leq\theta_1^{\alpha}\wedge\rho\bigr\}
\end{eqnarray}
and that
%
\begin{equation}
\label{eqn_thirdpart_of_delta} Y(\rho)-\bigl(\varphi\bigl(\rho
,X(\rho)\bigr)+\kappa\bigr)<0\qquad
\mbox{on } H\cap\bigl\{ \rho<\theta^{\alpha} \bigr\}.
\end{equation}
On the one hand, on $H\cap\{\theta_1^{\alpha}<\theta_2^{\alpha
}\wedge
\rho\}$, $\varphi(\theta_1^{\alpha},X(\theta_1^{\alpha}))+\kappa
< w
(\theta_1^{\alpha},X(\theta_1^{\alpha}))$. Then from \eqref
{eqn_firstpart_of_delta}, we will have
%
\begin{equation}
\label{Y_below_w_part1} Y\bigl(\theta_1^{\alpha}\bigr)<w\bigl(\theta_1^{\alpha},X\bigl(\theta_1^{\alpha}
\bigr)\bigr) \qquad\mbox{on } H\cap\bigl\{\theta_1^{\alpha}<
\theta_2^{\alpha}\wedge\rho\bigr\}.
\end{equation}
On the other hand, on $H\cap\{\theta_2^{\alpha}\leq\theta
_1^{\alpha
}\wedge\rho\}$, we get $ Y(\theta_2^{\alpha})-\varphi(\theta
_2^{\alpha
},X(\theta_2^{\alpha}))=-\delta$. [The right-hand side cannot be equal
to $\delta$; otherwise $(\ref{eqn_secondpart_of_delta})$ would be
contradicted.] Recalling the fact that $\varphi<w+\delta$ on
$\overline
{B(t_0, x_0, \eps/2)}$, this observation gives that
%
\begin{eqnarray}
\label{Y_below_w_part2} Y\bigl(\theta_2^{\alpha}\bigr)-w\bigl(\theta_2^{\alpha},X\bigl(\theta_2^{\alpha
}
\bigr)\bigr)=(\varphi-w)
 \bigl(\theta_2^{\alpha},
 X\bigl(\theta_2^{\alpha}\bigr)\bigr)-\delta<0
 \nonumber
 \\[-8pt]
 \\[-8pt]
\eqntext{ \mbox{on } H\cap\bigl\{
\theta_2^{\alpha}\leq\theta_1^{\alpha}\wedge
\rho\bigr\}.}
\end{eqnarray}
We have obtained in $\eqref{Y_below_w_part1}$ and $\eqref
{Y_below_w_part2}$ that
\[
Y\bigl(\theta^{\alpha}\bigr)<w\bigl(\theta^{\alpha},X\bigl(\theta^{\alpha}\bigr)\bigr) \qquad\mbox{on } H\cap\bigl\{\theta
^{\alpha}\leq
\rho\bigr\}.
\]
Now from the definition of stochastic sub-solutions and of $\alpha^*$,
we have that
%
\begin{equation}
\label{conlcusion_on_theta_less_than_rho} \mathbb{P}\bigl(G_0|H\cap
\bigl\{
\theta^{\alpha}\leq\rho\bigr\}\bigr)>0\qquad \mbox{if } \mathbb
{P}\bigl(H\cap\bigl
\{\theta^{\alpha}\leq\rho\bigr\}\bigr)>0.
\end{equation}
On the other hand, $\eqref{eqn_thirdpart_of_delta}$ implies that
%
\begin{equation}
\label{conlcusion_on_theta_larger_than_rho} \mathbb{P}\bigl(G|H\cap
\bigl\{\theta^{\alpha}> \rho\bigr\}
\bigr)>0 \qquad\mbox{if } \mathbb{P}\bigl(H\cap\bigl\{\theta^{\alpha
}> \rho\bigr\}
\bigr)>0.
\end{equation}
Since $\mathbb{P}(H)>0, G_0\subset G$, and $H\subset E_1\cap B$,
\eqref
{conlcusion_on_theta_less_than_rho} and \eqref
{conlcusion_on_theta_larger_than_rho} imply $\mathbb{P}(G\cap E_1
\cap B)>0$.

\textit{Step \textup{2.2.} The boundary condition}:

Assume that for some $x_{0} \in\mathbb{R}^d$, we have
%
\begin{equation}
\label{eq: step2.2_contra at the boundary} v^{-}(T,x_{0})<g(x_{0}).
\end{equation}
Since $g$ is LSC, then from (\ref{eq: step2.2_contra at the boundary})
there exists $\eps>0$ such that
%
\begin{equation}
\label{eq: step2.2_contra at the boundary conclusion}
v^{-}(T,x_{0})<g(x)-\eps\qquad\mbox{for }
|x-x_0|\leq\eps.
\end{equation}
Since $v^{-}$ is LSC, then $v^{-}$ is bounded below on the compact
(rectangular) torus $\mathbb{T}=\overline{B(T,x_{0};\eps
)}-B(T,x_{0};\eps/2)$. Choose $\beta>0$ small enough, such that
\[
v^{-}(T,x_{0})-\frac{\eps^2}{4\beta}<\inf
_{\mathbb
{T}}v^{-}(t,x)-\eps.
\]
By a Dini-type argument, there exists a $w\in\Uc^{-}$, such that
%
\begin{equation}
\label{Eq: 2.2_Dini_Argument} v^{-}(T,x_{0})-\frac{\eps^2}{4\beta
}<\inf
_{\mathbb{T}}w(t,x)-\eps.
\end{equation}
We now define for $C>0$,
\[
\varphi^{\beta,C}=v^{-}(T,x_0)-\frac{|x-x_0|^2}{\beta}-C
(T-t).
\]
For any $a_0$ we can choose large enough $C$,\setcounter{footnote}{1}\footnote{Similar analysis
for \eqref{eq: contra 2_proof1.2_stepA} will guarantee that choosing
$C$ is possible. }
\[
\varphi_t^{\beta,C} +H^{u_0,a_0}\bigl(\cdot,
\varphi^{\beta,C}, D\varphi^{\beta,C}, D^2
\varphi^{\beta,C}\bigr)>0 \qquad\mbox{on }  \overline{B(T,x_{0};
\eps)},
\]
where $H^{u,a}$ is the same as that in \eqref{eq: H^{u,a}}, $u_0=\hat
{u}(T,x_0,\varphi(T,x_0),\sigma_X(T, x_0, \break a_0) D\varphi(T, x_0), a_0)$.
Then from the continuity of the coefficients in Assumption~\ref
{Assump: drift_and_volatility} and the continuity of $\hat u$ in
Assumption~\ref{ass: def hat u + regu}, for any $a_0$, and there exists
a small enough $\delta>0$ such that
\begin{eqnarray*}
\label{eq: super sol strict local proof super sol v*} 
&&\varphi_t^{\beta,C}
+H^{u,a_0}\bigl(\cdot,y, D\varphi^{\beta,C}, D^2\varphi
^{\beta,C}\bigr)>0 
\qquad \forall(t,x)\in\overline{B(T,
x_0, \eps)}\\
&&\qquad \mbox{and } (y,u) \in R\times U \mbox{ s.t. } \bigl|y-
\vp^{\beta,C}(t,x)\bigr|\le\delta\\
&&\qquad\mbox{and } \bigl\|\sigma_Y(t,x,y,u,a_0)-
\sigma_X(t,x,a_0)D\varphi^{\beta,C}(t,x)\bigr\|\leq
\delta. 
\end{eqnarray*}
Choosing $C$ at least as large as $\eps/2 \beta$, we obtain from
\eqref
{Eq: 2.2_Dini_Argument} that
\[
\vp^{\beta,C}\leq w-\eps\qquad\mbox{on } \mathbb{T}.
\]
Also we have that
%
\begin{equation}
\label{eq: comparison_varphi_and_g_2.2} \vp^{\beta, C}(T,x)\leq
v^{-}(T,x_0)<g(x)-
\eps\qquad\mbox{for } |x-x_{0}|\leq\eps.
\end{equation}
Now for $\kappa<\eps\wedge\delta$ define
%
\begin{equation}
\label{eq: w_kappa_2.2} w^{\beta,C,\kappa}\triangleq\cases{ %
 \bigl(\vp^{\beta,C} +\kappa\bigr)\vee w,&\quad $\mbox{on } \overline{ B(T,
x_{0}, \eps)},$
\vspace*{2pt}\cr
w,&\quad $\mbox{outside } \overline{ B(T, x_0, \eps)}.$}
\end{equation}
From \eqref{eq: comparison_varphi_and_g_2.2} and \eqref{eq:
w_kappa_2.2} it is easy to see that $w^{\beta,C,\kappa}(T,x)\leq g(x)$.
By applying arguments similar to those in step 2.1, we can show that
$w^{\beta,C,\kappa}$ is a stochastic sub-solution with $ w^{\beta
,C,\kappa}(T,x_{0})>v^{-}(T,x_{0}) $. This contradicts the definition
of $v^{-}$.
\end{pf}
To characterize $v$ as the unique viscosity solution of \eqref{HJB
equation}, we need a comparison principle.
%
\begin{proposition}[(Comparison principle)]\label{prop: comparison principle}
Under Assumptions \ref{Assump: drift_and_volatility}, \ref{ass: def hat
u + regu} and~\ref{ass: regu muY_hatu}, the comparison principle for
\eqref{HJB equation} holds. More precisely, let $U$ (resp.,~$V$) be a
bounded USC viscosity sub-solution (resp., LSC viscosity
super-solution) to \eqref{HJB equation}. If $U\leq V$ on $\mathcal
{D}_{T}$, then $U
\leq V$ on $\D$.
\end{proposition}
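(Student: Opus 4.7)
The plan is to adapt the standard Crandall--Ishii--Lions doubling-of-variables argument for second-order parabolic equations. The central subtlety is that the Hamiltonian $H$ depends on $y$ through $-\mu_Y^{\hat u}(t,x,y,\sigma_X(t,x,a)p,a)$, which by Assumption~\ref{ass: regu muY_hatu} is Lipschitz in $y$ uniformly in $a$ but is not a priori monotone in $y$. I would therefore first pass to an exponentially rescaled pair of solutions, exactly as in Step~B of the proof of Theorem~\ref{thm: main theorem}: letting $L$ be the uniform Lipschitz constant of $y\mapsto \mu_Y^{\hat u}$ and picking $c>L$, set $\tilde U(t,x):=e^{ct}U(t,x)$ and $\tilde V(t,x):=e^{ct}V(t,x)$. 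Then $\tilde U,\tilde V$ remain bounded USC/LSC viscosity sub-/super-solutions of the rescaled equation whose Hamiltonian $\tilde H$ (as in \eqref{tilde_H}) is now non-decreasing in $y$, and it suffices to prove $\tilde U\le \tilde V$ on $\D$.

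Suppose for contradiction that $M:=\sup_{\D}(\tilde U-\tilde V)>0$. To handle both the terminal data on $\DT$ (where the two solutions are already ordered) and the noncompactness of $\R^d$, consider the doubled/localized auxiliary function
\[
\Phi_{n,\eta,\gamma}(t,x,y):=\tilde U(t,x)-\tilde V(t,y)-\tfrac{n}{2}|x-y|^{2}-\gamma(|x|^{2}+|y|^{2})-\tfrac{\eta}{T-t},
\]
with $n\in\N$ large and $\eta,\gamma>0$ small enough that $\sup \Phi_{n,\eta,\gamma}>0$. The boundedness of $\tilde U,\tilde V$ together with the coercive $\gamma$-penalty forces a global maximizer $(t_n,x_n,y_n)$ with $t_n<T$, and the classical estimates yield that $x_n,y_n$ are bounded uniformly in $n$, with $|x_n-y_n|\to 0$ and $n|x_n-y_n|^{2}\to 0$ as $n\to\infty$.

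Ishii's Lemma then provides parabolic jets $(a_n,\,n(x_n-y_n)+2\gamma x_n,\,X_n)\in \bar{\cP}^{2,+}\tilde U(t_n,x_n)$ and $(a_n,\,n(x_n-y_n)-2\gamma y_n,\,Y_n)\in \bar{\cP}^{2,-}\tilde V(t_n,y_n)$ with $a_n=-\eta/(T-t_n)^{2}$, together with the standard three-line matrix inequality controlling $X_n-Y_n$. Writing the two viscosity inequalities and subtracting gives
\[
\frac{\eta}{(T-t_n)^{2}}\le \tilde H(t_n,y_n,\tilde V(t_n,y_n),n(x_n-y_n)-2\gamma y_n,Y_n)-\tilde H(t_n,x_n,\tilde U(t_n,x_n),n(x_n-y_n)+2\gamma x_n,X_n).
\]
Combining the Lipschitz bounds on $\mu_X,\sigma_X$ in $(t,x)$ (Assumption~\ref{Assump: drift_and_volatility}), the Lipschitz bound on $\mu_Y^{\hat u}$ in $(t,x,y,z)$ uniform in $a$ (Assumption~\ref{ass: regu muY_hatu}), the $y$-monotonicity of $\tilde H$ at the maximizer (where $\tilde U(t_n,x_n)>\tilde V(t_n,y_n)$), and the Ishii trace bound $\text{Tr}[\sigma_X\sigma_X^{\top}(t_n,x_n,a)X_n-\sigma_X\sigma_X^{\top}(t_n,y_n,a)Y_n]\le C\,n|x_n-y_n|^{2}$, the right-hand side is estimated uniformly in $a\in A$ (which is bounded) by $C(n|x_n-y_n|^{2}+|x_n-y_n|+\gamma)$. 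Sending $n\to\infty$ and then $\gamma\to 0$ drives the right-hand side to $0$ while the left-hand side stays at least $\eta/T^{2}>0$, yielding the desired contradiction.

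The step I expect to be the main obstacle is propagating the Lipschitz and matrix estimates uniformly through the $\sup_{a\in A}$ defining $\tilde H$: the composition $\mu_Y^{\hat u}(t,x,y,\sigma_X(t,x,a)p,a)$ mixes all three regularity hypotheses above, and one must keep tight control on the dependence on the test-function gradient $p_n=n(x_n-y_n)$, since any leftover $n$-dependence that does not vanish after the $n|x_n-y_n|^{2}\to 0$ reduction would break the contradiction. That $\mu_Y^{\hat u}$ is Lipschitz uniformly in $a$ and that $A$ is bounded are precisely the ingredients that make this step feasible.
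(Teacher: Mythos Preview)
Your plan is correct and follows the standard Crandall--Ishii--Lions route; the only slip is the direction of monotonicity: after rescaling with $c>L$, the map $y\mapsto \tilde\mu_Y^{\hat u}$ becomes strictly \emph{increasing}, so $\tilde H$ is strictly \emph{decreasing} in $y$ (not non-decreasing as you write). This is precisely the sign you need at the maximizer, so the argument goes through unchanged.

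The paper's proof takes a different but equivalent localization/penalization strategy. Instead of inserting $\gamma(|x|^2+|y|^2)$ and $\eta/(T-t)$ directly into the doubled auxiliary function, the paper first shows that $V_\delta:=V+\delta e^{-\lambda t}(1+|x|^2)$ is itself a viscosity super-solution for $\lambda$ large enough (this step uses the uniform Lipschitz bounds of Assumptions~\ref{Assump: drift_and_volatility} and~\ref{ass: regu muY_hatu}); boundedness of $U,V$ then forces $\sup(U-V_\delta)$ to be attained on a compact subset of $[0,T)\times\R^d$, so the doubling needs only the term $\frac{1}{2\varepsilon}(|t-s|^2+|x-y|^2)$. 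The contradiction in the paper comes not from an $\eta/(T-t)^2$ term but from the \emph{strict} monotonicity margin $\gamma=c-L>0$: one obtains $\text{LHS}\le -\gamma M^*<0$ while $\text{RHS}\to 0$. Your approach trades the strict-monotonicity contradiction for the time-penalty and the super-solution perturbation for the spatial $\gamma$-penalty; both are textbook devices, and neither is materially simpler. The paper's route keeps the Ishii step slightly cleaner (no extra $2\gamma x_n$ in the gradients, no $\gamma I$ in the Hessians), at the cost of the preliminary Step~2 verifying that $V_\delta$ is still a super-solution.
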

\begin{pf}
\textit{Step} 1: Without loss of generality, assume that
%
\begin{eqnarray}
\label{eq: H_decreasing_in_y}&& \exists\gamma>0, \mbox{ such that }
H(t,x,y,p,M)-H
\bigl(t,x,y',p,M\bigr)< -\gamma\bigl(y-y'\bigr)
\nonumber
\\[-8pt]
\\[-8pt]
\nonumber
&&\qquad\mbox{for all } y>y'.
\end{eqnarray}
Otherwise, let $\widetilde{U}(t,x)=e^{ct}U(t,x)$ and $\widetilde
{V}(t,x)=e^{ct}V(t,x)$. Then a straightforward
calculation shows that $\widetilde{U}$ (resp., $\widetilde{V}$) is a
sub-solution (resp., super-solution) to
%
\begin{eqnarray}
-\varphi_t-\widetilde{H}\bigl(\cdot,
\vp,D\vp,D^{2}\vp\bigr)&=&0 \qquad\mbox{on } \mathcal{D}_{<T},
\nonumber
\\[-8pt]
\\[-8pt]
\nonumber
\vp&=&\widetilde g\qquad\mbox{on } \mathcal{D}_{T},
\end{eqnarray}
where $\widetilde{g}(x)=e^{cT}g(x)$ and $ \widetilde{H}$ is the same
as that in \eqref{tilde_H}. We can choose $c$ large enough such that
\eqref{eq: H_decreasing_in_y} holds for $\widetilde{H}$. In fact, from
the Lipschitz continuity of $\mu_Y^{\hat u}$ in Assumption~\ref{ass:
regu muY_hatu}, for $y>y'$,
\begin{eqnarray*} &&\widetilde{H}^{a}(t,x,y,p,M)-
\widetilde{H}^{a}\bigl(t,x,y',p,M\bigr)
\\
&&\qquad=-c\bigl(y-y'\bigr)+ e^{ct} \bigl(\mu^{\tilde u}_{Y}
\bigl(t,x,e^{-c t}y',e^{-c
t}\sigma
_{X}(t,x,a)p,a\bigr)\\
&&\hspace*{111pt}{}-\mu^{\tilde u}_{Y}
\bigl(t,x,e^{-c t}y,e^{-c t}\sigma_{X}(t,x,a)p,a
\bigr) \bigr)
\\
&&\qquad \leq-c\bigl(y-y'\bigr)+e^{ct}L\cdot e^{-ct}
\bigl(y-y'\bigr)
\\
&&\qquad=-(c-L) \bigl(y-y'\bigr),
\end{eqnarray*}
where $L$ is the Lipschitz constant and
\begin{eqnarray*}
\label{tilde_H_a} \widetilde{H}^{a}(t,x,y,p,M)&:=& -cy-e^{ct}
\mu^{\tilde u}_{Y}\bigl(t,x,e^{-c t}y,e^{-c t}
\sigma_{X}(t,x,a)p,a\bigr)\\
&&{} +\mu_{X}(t,x,a)^{\top}
p + \tfrac{1}2\operatorname{Tr} \bigl[\sigma_{X}
\sigma_{X}^{\top}(t,x,a)M \bigr].
\end{eqnarray*}
Then $\gamma:=c-L>0$ for large enough c. Since $\widetilde{H}(\cdot
)=\sup_{a\in A}\widetilde{H}^{a}(\cdot)$,
equation~\eqref{eq: H_decreasing_in_y} holds for $\widetilde{H}$.

\textit{Step} 2:
In this step, we claim that for large enough $\lambda$, $V_{\delta
}:=V+\delta e^{-\lambda t}(1+|x|^2)$ is a LSC viscosity super-solution
to \eqref{HJB equation} for $\delta>0$. Then, if we can show that
$U-V_{\delta}\leq0$ on $\D$ for all $\delta>0$, we will get the
required result by sending $\delta$ to zero. Now we prove the above claim.

Obviously, the boundary condition is satisfied. Let $\varphi$ be a
smooth function which strictly touches $V_{\delta}$ from below at $
(t_0,x_0)\in\mathcal{D}_{<T}$. Let $\varphi^{\delta}=\varphi
-\delta e^{-\lambda
t}(1+|x|^2)$. Then $V-\varphi^{\delta}$ has a strict minimum at
$(t_0,x_0)$. Since $V$ is a viscosity super-solution, then it holds that
%
\begin{equation}
\label{eq: supersolution property of varphi_eps} \varphi^{\delta
}_t+H\bigl(t,x,
\varphi^{\delta},D\varphi^{\delta}, D^2\varphi
^{\delta}\bigr)\leq0.
\end{equation}
Note that
%
\begin{eqnarray}
\label{eq: derivative_varphi_eps} \varphi^{\delta}_t&=&\varphi
_t+\lambda
\delta e^{-\lambda t}\bigl(1+|x|^2\bigr),\qquad D\varphi^{\delta}=D
\varphi-2\delta e^{-\lambda t}x,
\nonumber
\\[-8pt]
\\[-8pt]
\nonumber
 D^2\varphi^{\delta}&=&D^2
\varphi-2\delta e^{-\lambda t}I_{d\times d}.
\end{eqnarray}
Consider the difference of $H(t,x,\varphi^{\delta},D\varphi^{\delta
},D^2\varphi^{\delta})$ and $H(t,x,\varphi,D\varphi, D^2\varphi)$. From
\eqref{eq: derivative_varphi_eps} and Assumption~\ref{Assump:
drift_and_volatility}, we get
%
\begin{eqnarray}\qquad
\label{eq: H-H_eps1} \bigl\llvert\mu_X^{\top}(t,x,a)D
\varphi(t,x)-\mu_X^{\top
}(t,x,a)D\varphi
^{\delta}(t,x)\bigr\rrvert&\leq& K\bigl|D\varphi(t,x)-D\varphi^{\delta
}(t,x)\bigr|
\nonumber
\\[-8pt]
\\[-8pt]
\nonumber
&=&2K
\delta e^{-\lambda t}|x|.
\end{eqnarray}
Similarly,
%
\begin{eqnarray}
\label{eq: H-H_eps2} &&\bigl\llvert\tfrac{1}{2}\operatorname
{Tr}\bigl(\sigma_X\sigma^{\top
}_{X}(t,x,a)
\bigr)D^2\varphi(t,x)-\tfrac{1}{2}\operatorname{Tr}\bigl(\sigma_X\sigma^{\top
}_{X}(t,x,a)
\bigr)D^2\varphi^{\delta}(t,x)\bigr\rrvert
\nonumber
\\[-8pt]
\\[-8pt]
\nonumber
&&\qquad\leq
K^2\,d\delta e^{-\lambda t}.
\end{eqnarray}
From the Lipschitz continuity of $\mu_Y^{\hat u}$ in Assumption~\ref
{ass: regu muY_hatu},
%
\begin{eqnarray}
\label{eq: H-H_eps3} &&\bigl\llvert\mu_Y^{\hat u}\bigl(t,x,\varphi,
\sigma_X(t,x,a)D\varphi,a\bigr)-\mu_Y^{\hat u}
\bigl(t,x,\vp^{\delta}, \sigma_X(t,x,a)D\vp^{\delta
},a
\bigr)\bigr\rrvert
\nonumber
\\[-8pt]
\\[-8pt]
\nonumber
&&\qquad\leq L\bigl(\delta e^{-\lambda t}\bigl(1+|x|^2
\bigr)+ 2K\delta e^{-\lambda t}|x|\bigr).
\end{eqnarray}
From \eqref{eq: H-H_eps1}, \eqref{eq: H-H_eps2} and \eqref{eq: H-H_eps3},
\begin{eqnarray*}
&&\bigl\llvert H\bigl(t,x,\varphi^{\delta},D\varphi^{\delta},D^2
\varphi^{\delta
}\bigr)-H\bigl(t,x,\varphi,D\varphi,D^2\varphi
\bigr)\bigr\rrvert\\
&&\qquad\leq\delta e^{-\lambda
t}\bigl(1+|x|^2\bigr)
\bigl(L+LK+K^2d+K\bigr).
\end{eqnarray*}
Taking $\lambda>\lambda^*:=L+LK+K^2d+K$,
from the above inequality, we get
\begin{eqnarray*}
\vp_t+H\bigl(t,x,\vp,D\vp,D^2\vp\bigr)&\leq&
\vp_t^{\delta}+H\bigl(t,x,\varphi^{\delta
},D
\varphi^{\delta},D^2\varphi^{\delta}\bigr) -\lambda\delta
e^{-\lambda t}\bigl(1+|x|^2\bigr)
\\
&&{}+\bigl\llvert H\bigl(t,x,\varphi^{\delta},D\varphi^{\delta},D^2
\varphi^{\delta
}\bigr)-H\bigl(t,x,\varphi,D\varphi,D^2\varphi
\bigr)\bigr\rrvert
\\
&\leq& \vp_t^{\delta
}+H\bigl(t,x,\varphi^{\delta},D
\varphi^{\delta},D^2\varphi^{\delta
}\bigr)\leq0.
\end{eqnarray*}

\textit{Step} 3:
In this step, we show that $U-V_{\delta}\leq0$ on $\D$ for all
$\delta
>0$. From boundedness of $U$ and $V$, for all $\delta>0$,
%
\begin{equation}
\label{eq: -infty when x approaches infty } \lim_{|x|\rightarrow
\infty}\sup_{[0,T]}(U-V_{\delta})
(t,x)=-\infty.
\end{equation}
This implies the supremum of $U-V_{\delta}$ on $\D$ is attained on $[0,
T]\times\mathcal{O}$ for some open bounded set $\mathcal{O}$ of $\R^d$.
We assume
\[
M^{*}:= \sup_{\D}(U-V_{\delta}) = \max
_{[0,T)\times\mathcal
{O}}(U-V_{\delta})> 0,
\]
and we will obtain a contradiction to the above equation. We consider a
bounded sequence $(t_{\eps}, s_{\eps},x_{\eps},y_{\eps})_{\eps}$ that
maximizes $\Phi_{\eps}$ on $[0, T]^2 \times\R^d \times\R^d$ with
$\Phi
_{\eps}=U(t, x)- V_{\delta} (s, y)- \phi_{\eps}(t, s, x,y)$ and
$\phi
_{\eps}(t,s,x,y):=\frac{1}{2\eps}(|t-s|^2+|x-y|^2)$. By arguments
similar to those in Theorem~4.4.4 of \cite{Pham_book}, we know that
$(t_{\eps}, s_{\eps},x_{\eps},y_{\eps})_{\eps}$ converges to
$(t_0,t_0,x_0,x_0)$ for some $(t_0,x_0)\in[0,T]\times\mathcal{O}$ and
%
\begin{equation}
\label{eq: limits-of-doubling-function} M_{\eps}=\Phi(t_{\eps},
s_{\eps},x_{\eps},y_{\eps})
\rightarrow M^*\quad \mbox{and}\quad \phi_{\eps}(t_{\eps},
s_{\eps},x_{\eps},y_{\eps
})\rightarrow0.
\end{equation}
In view of Ishii's lemma (Lemma~\ref{lemma: Ishii's Lemma}), there
exist $M,N \in\mathcal{S}^d$ such that
\begin{eqnarray*}
\biggl(\frac{1}{\eps}(t_{\eps}-s_{\eps}),
\frac{1}{\eps}(x_{\eps
}-y_{\eps
}), M \biggr)&\in&
\overline{P}^{2,+}U(t, x),
\\
\biggl(\frac{1}{\eps}(t_{\eps}-s_{\eps}),
\frac{1}{\eps}(x_{\eps
}-y_{\eps
}), N \biggr)&\in&
\overline{P}^{2,-}V_{\delta}(t, x).
\end{eqnarray*}
From the viscosity sub-solution and super-solution characterization of
$U$ and $V_{\delta}$ in terms
of super-jets and sub-jets, we then have
\begin{eqnarray*}
-\frac{1}{\eps}(t_{\eps}-s_{\eps})-H\biggl(t_{\eps},x_{\eps
},U(t_{\eps
},x_{\eps}),
\frac{1}{\eps}(x_{\eps}-y_{\eps}), M\biggr)&\leq&0,
\\
-\frac{1}{\eps}(t_{\eps}-s_{\eps})-H\biggl(s_{\eps},y_{\eps
},V_{\delta
}(s_{\eps},y_{\eps}),
\frac{1}{\eps}(x_{\eps}-y_{\eps}), N\biggr)&\geq& 0.
\end{eqnarray*}
By subtracting the two inequalities above, we get
\[
H\biggl(t_{\eps},x_{\eps},U(t_{\eps},x_{\eps}),
\frac{1}{\eps}(x_{\eps
}-y_{\eps}), M\biggr)\geq H
\biggl(s_{\eps},y_{\eps},V_{\delta}(s_{\eps
},y_{\eps}),
\frac{1}{\eps}(x_{\eps}-y_{\eps}), N\biggr).
\]
Subtracting $H(t_{\eps},x_{\eps},V_{\delta}(s_{\eps},y_{\eps}),
\frac
{1}{\eps}(x_{\eps}-y_{\eps}), M)$ from both sides of the equation
above, we get
%
\begin{eqnarray}
\label{eq: comparison_proof} %
\mathrm{LHS}&:=& H
\biggl(t_{\eps},x_{\eps},U(t_{\eps},x_{\eps}),
\frac{1}{\eps
}(x_{\eps}-y_{\eps}), M\biggr)\nonumber\\
&&{}- H
\biggl(t_{\eps},x_{\eps},V_{\delta}(s_{\eps
},y_{\eps}),
\frac{1}{\eps}(x_{\eps}-y_{\eps}), M\biggr)\nonumber\\
&\geq&
H\biggl(s_{\eps},y_{\eps},V_{\delta}(s_{\eps},y_{\eps}),
\frac{1}{\eps
}(x_{\eps}-y_{\eps}), N\biggr)\\
&&{}-H
\biggl(t_{\eps},x_{\eps},V_{\delta}(s_{\eps
},y_{\eps
}),
\frac{1}{\eps}(x_{\eps}-y_{\eps}), M\biggr)\nonumber\\
&=:&
\mathrm{RHS}.\nonumber
\end{eqnarray}
On the one hand, since $U(t_\eps,x_\eps)-V_{\delta}(s_\eps,y_\eps
)\geq M^*$,
%
\begin{equation}
\label{eq: LHS} \mathrm{LHS}\leq-\gamma\bigl(U(t_\eps
,x_\eps)-V_{\delta}(s_\eps
,y_\eps)\bigr)\leq-\gamma M^{*}.
\end{equation}
On the other hand, applying inequality \eqref{eq: trace_inequality from
Ishii'e Lemma} to $C =\sigma_{X}(t_\eps,x_\eps,a)$ and $D =\sigma
_{X}(s_\eps,y_\eps,a)$, we get
\begin{eqnarray*} I_1&:=&\biggl\llvert\frac{1}{2}
\operatorname{Tr}\bigl[\sigma_X\sigma_X^{\top
}(t_\eps,x_\eps
,a)M\bigr]-\frac{1}{2}\operatorname{Tr}\bigl[\sigma_X
\sigma_X^{\top}(s_\eps,y_\eps,a)N
\bigr]\biggr\rrvert
\\
&\leq&\frac{3}{2\eps}\operatorname{Tr} \bigl[\bigl(\sigma_X(t_\eps
,x_\eps)-\sigma_X(s_\eps,y_\eps)
\bigr) \bigl(\sigma_X(t_\eps,x_\eps)-\sigma
_X(s_\eps,y_\eps)\bigr)^{\top}
\bigr]
\\
&\leq&\frac{1}{2\eps}O\bigl(|t_\eps-s_\eps|^2+|x_\eps-y_\eps
|^2\bigr)\rightarrow0.
\end{eqnarray*}
In the last inequality, we use \eqref{eq:
limits-of-doubling-function} and Lipschitz continuity of $\sigma_X$
(uniformly in~$a$). Therefore,
%
\begin{equation}
\label{eq: convergence of I1} I_1 \rightarrow0 \qquad\mbox{as } \eps
\rightarrow0,
\mbox{ uniformly in } a\in A.
\end{equation}
Similarly, from $\eqref{eq: limits-of-doubling-function}$ and Lipschitz
continuity of $\mu_X$ (uniformly in $a$)
%
\begin{eqnarray}
\label{eq: convergence of I2} I_2:=\biggl\llvert\frac{1}{\eps}
\mu_X^{\top}(t_{\eps},x_{\eps
},a)
(x_{\eps
}-y_{\eps})-\frac{1}{\eps}\mu_X^{\top}(s_{\eps},y_{\eps
},a)
(x_{\eps
}-y_{\eps})\biggr\rrvert\rightarrow0
\nonumber
\\[-8pt]
\\[-8pt]
 \eqntext{\mbox{uniformly
in } a\in A.}
\end{eqnarray}
From \eqref{eq: limits-of-doubling-function} and Lipschitz continuity
of $\sigma_X$ (Assumption~\ref{Assump: drift_and_volatility}) and
$\mu
_Y^{\hat u}$ (Assumption~\ref{ass: regu muY_hatu}), we get
\begin{eqnarray*} I_3&:=&\biggl\llvert
\mu^{\hat u}_{Y} \biggl(t_{\eps},x_{\eps},V_{\delta
}(s_{\eps
},y_{\eps}),
\sigma_{X}(t_{\eps},x_{\eps},a) \biggl(\frac{x_{\eps
}-y_{\eps
}}{\eps} \biggr),a \biggr)\\
&&{}-\mu^{\hat u}_{Y}
\biggl(s_{\eps},y_{\eps
},V_{\delta}(s_{\eps},y_{\eps}),
\sigma_{X}(s_{\eps},y_{\eps
},a) \biggl(\frac{x_{\eps}-y_{\eps}}{\eps} \biggr),a \biggr)\biggr\rrvert
\\
&\leq&\nu\bigl(|t_{\eps}-s_{\eps}|+|x_{\eps}-y_{\eps}|\bigr)+
\frac
{1}{2\eps}O\bigl(|t_\eps-s_\eps|^2+|x_\eps-y_\eps|^2
\bigr) \rightarrow0\qquad \mbox{as } \eps\rightarrow0,
\end{eqnarray*}
where $\nu(z)\rightarrow0$ as $z\rightarrow0$. The first term in the
last inequality above is the modulus of continuity of $\mu^{\hat
u}_{Y}$ in the variables $(t,x)$ (uniformly in $a$) and the second term
comes from similar arguments for $I_1$ and $I_2$. Therefore,
%
\begin{equation}
\label{eq: convergence of I3} I_3 \rightarrow0\qquad \mbox{uniformly in }
a\in A.
\end{equation}
Then \eqref{eq: convergence of I1}, \eqref{eq: convergence of I2} and
\eqref{eq: convergence of I3} imply that
%
\begin{equation}
\label{eq: RHS} \mathrm{RHS}\rightarrow0\qquad \mbox{as } \eps
\rightarrow0.
\end{equation}
From \eqref{eq: comparison_proof}, \eqref{eq: LHS} and \eqref{eq: RHS},
we obtain a contradiction.
\end{pf}

\begin{corollary}\label{coro: v unique viscosity solution}
If $g$ is continuous and Assumptions \ref{Assump: drift_and_volatility}--\ref{ass:colunepty}
hold, then $v$ is the
unique bounded continuous viscosity solution of \eqref{HJB equation}.
\end{corollary}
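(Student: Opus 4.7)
The plan is to combine Theorem~\ref{thm: main theorem}, the comparison principle in Proposition~\ref{prop: comparison principle}, and the envelope \eqref{eq:intfvmavp}. Since $g$ is continuous it is simultaneously USC and LSC, and all of Assumptions~\ref{Assump: drift_and_volatility}--\ref{ass:colunepty} are in force, so both parts of Theorem~\ref{thm: main theorem} apply: $v^{+}$ is a bounded USC viscosity sub-solution and $v^{-}$ is a bounded LSC viscosity super-solution of \eqref{HJB equation}.

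The first substantive step is to pin down the terminal traces. Every $w\in\Uc^{+}$ satisfies $w(T,\cdot)\ge g(\cdot)$ by Definition~\ref{def: Stochasticsuper-solution}, hence $v^{+}(T,\cdot)\ge g(\cdot)$; on the other hand, the boundary argument carried out in Step~1.2 of the proof of Theorem~\ref{thm: main theorem} rules out any strict inequality $v^{+}(T,x_{0})>g(x_{0})$ when $g$ is USC, so $v^{+}(T,\cdot)=g(\cdot)$. The symmetric reasoning with $\Uc^{-}$ and Step~2.2 delivers $v^{-}(T,\cdot)=g(\cdot)$. In particular $v^{+}\le v^{-}$ on $\DT$, so Proposition~\ref{prop: comparison principle} applied to the ordered pair $(U,V)=(v^{+},v^{-})$ yields $v^{+}\le v^{-}$ on all of $\D$. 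Combined with the sandwich $v^{-}\le v\le v^{+}$ from \eqref{eq:intfvmavp}, all three functions coincide; because $v^{+}$ is USC and $v^{-}$ is LSC and they agree, the common value $v$ is continuous, and it is a bounded continuous viscosity solution of \eqref{HJB equation}.

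Uniqueness then falls out of a second invocation of the comparison principle. If $\tilde v$ is any other bounded continuous viscosity solution of \eqref{HJB equation}, then $\tilde v=g=v$ on $\DT$, so Proposition~\ref{prop: comparison principle} applied once with $(U,V)=(\tilde v,v)$ and once with $(U,V)=(v,\tilde v)$ forces $\tilde v=v$ on $\D$.

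There is no genuine obstacle beyond bookkeeping; the only delicate point is verifying the boundary hypothesis of Proposition~\ref{prop: comparison principle}, which is why I would explicitly extract $v^{\pm}(T,\cdot)=g$ from the two-sided information available when $g$ is continuous, by pairing the inequalities built into the definitions of $\Uc^{\pm}$ with the boundary conclusions already proved inside Theorem~\ref{thm: main theorem}.
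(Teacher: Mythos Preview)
Your proof is correct and follows essentially the same route as the paper: use Theorem~\ref{thm: main theorem} to get the sub- and super-solution properties of $v^{+}$ and $v^{-}$, compare them on $\DT$ via the boundary conditions, apply Proposition~\ref{prop: comparison principle}, and collapse the sandwich \eqref{eq:intfvmavp}. The only cosmetic difference is that the paper uses directly the one-sided inequalities $v^{+}(T,\cdot)\le g\le v^{-}(T,\cdot)$ coming from the boundary part of Theorem~\ref{thm: main theorem}, whereas you additionally invoke the opposite inequalities built into Definitions~\ref{def: Stochasticsuper-solution}--\ref{def: Stochasticsub-solution} to get equality at $T$; this extra step is harmless but not needed for the comparison.
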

\begin{pf}
From Theorem \ref{thm: main theorem}, $v^{+}$ (resp., $v^{-}$) is a bounded USC
viscosity sub-solution (resp., LSC viscosity super-solution) to \eqref
{HJB equation}. Then $v^+(T,x)\leq g(x)\leq v^{-}(T,x) $. This implies
$v^+\leq v^{-}$ on $\D$ from Proposition~\ref{prop: comparison
principle}. Since $v^{+}\geq v\geq v^{-}$ by definition, $v^{+}= v=
v^{-}$. We have shown that $v$ is continuous and a bounded viscosity
solution of \eqref{HJB equation}.

To check the uniqueness, let $w$ be a bounded continuous viscosity
solution of \eqref{HJB equation}. Note that $w$ is a LSC viscosity
super-solution and $v$ is an USC viscosity sub-solution of \eqref{HJB
equation}. From Proposition~\ref{prop: comparison principle}, $v\leq
w$ on $\D$. Similarly, $w\leq v$ on $\D$. This implies $w=v$ on $\D$.
\end{pf}

From Theorem~\ref{thm: main theorem} and Corollary~\ref{coro: v unique
viscosity solution}, we obtain dynamic programming principle as a byproduct.
%
\begin{corollary}[(Dynamic programming principle)]\label{coro:DPP}
Assume $g$ is continuous and Assumptions~\ref{Assump:
drift_and_volatility}--\ref{ass:colunepty} hold. For any $(t,x)\in\D$,
the following two statements hold:
\begin{enumerate}
\item[DPP 1.] For any $y>v(t,x)$, there exists $\mathfrak
{u}\in\mfU
(t)$ such that for all $\alpha\in\Ac^t$ and $\theta\in\mathbb{S}^t$,
\[
Y_{t,x,y}^{\mathfrak{u},\alpha}(\theta)\geq v\bigl(\theta
,X_{t,x}^{\alpha}(\theta)\bigr).
\]
\item[DPP 2.] For any $y<v(t,x)$ and $\mathfrak{u}\in\mfU
(t)$, there
exists $\alpha\in\Ac^t$ such that for all $\theta\in\mathbb{S}^t$,
\[
\mathbb{P} \bigl(Y_{t,x,y}^{\mathfrak{u},\alpha}\geq v\bigl(\theta
,X_{t,x}^{\alpha
}(\theta)\bigr) \bigr)<1.
\]
\end{enumerate}
\end{corollary}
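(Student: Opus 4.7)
The plan is to deduce both DPP statements directly from the characterization of $v$ proved in Corollary~\ref{coro: v unique viscosity solution}, which yields $v^{+}=v=v^{-}$, together with the defining properties of stochastic super- and sub-solutions evaluated at the trivial family $\tau^{\alpha}\equiv t$. No new PDE or BSDE-type estimates are needed; the whole argument is a transcription of Definitions~\ref{def: Stochasticsuper-solution} and \ref{def: Stochasticsub-solution} once one has identified $v$ with the two envelopes.

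For DPP 1, fix $(t,x)\in\D$ and $y>v(t,x)$. Since $v=v^{+}=\inf_{w\in\Uc^{+}}w$, I pick $w\in\Uc^{+}$ with $w(t,x)<y$. The family $\tau^{\alpha}\equiv t$ trivially belongs to $\mathfrak{S}^{t}$ and $\mfU(t,\{\tau^{\alpha}\})=\mfU(t)$, so applying Definition~\ref{def: Stochasticsuper-solution}(2) with any $\mfu\in\mfU(t)$ produces a $\tilde{\mfu}\in\mfU(t)$ such that, for every $\alpha\in\Ac^{t}$ and every $\rho\in\mathbb{S}^{t}$ with $t\le\rho\le T$,
\[
Y^{\tilde{\mfu},\alpha}_{t,x,y}(\rho)\ge w(\rho,X^{\alpha}_{t,x}(\rho))\quad\mathbb{P}\text{-a.s. on }\{y>w(t,x)\}.
\]
Because $y>w(t,x)$ is a deterministic condition, the exceptional set is empty, and since $w\ge v^{+}=v$ pointwise, the desired inequality $Y^{\tilde{\mfu},\alpha}_{t,x,y}(\theta)\ge v(\theta,X^{\alpha}_{t,x}(\theta))$ follows by setting $\rho=\theta$ for an arbitrary $\theta\in\mathbb{S}^{t}$. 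The key observation is that the same $\tilde{\mfu}$ works simultaneously for every $\alpha$ and every $\rho$, which is exactly what the super-solution property guarantees.

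For DPP 2, fix $(t,x)\in\D$, $y<v(t,x)$, and $\mfu\in\mfU(t)$. Since $v=v^{-}=\sup_{w\in\Uc^{-}}w$, I select $w\in\Uc^{-}$ with $w(t,x)>y$. Again using $\tau^{\alpha}\equiv t\in\mathfrak{S}^{t}$, Definition~\ref{def: Stochasticsub-solution}(2) applied to $\mfu$ and some arbitrary $\alpha\in\Ac^{t}$ yields $\tilde{\alpha}\in\Ac^{t}$ (independent of $\rho$) such that, since $\Fc^{t}_{t}$ is trivial and $\{y<w(t,x)\}=\Omega$, taking $B=\Omega$ gives
\[
\mathbb{P}\!\left(Y^{\mfu,\tilde{\alpha}}_{t,x,y}(\rho)<w(\rho,X^{\tilde{\alpha}}_{t,x}(\rho))\right)>0
\quad\text{for every }\rho\in\mathbb{S}^{t}.
\]
(Note $\alpha\otimes_{t}\tilde{\alpha}=\tilde{\alpha}$, so the auxiliary $\alpha$ drops out.) Because $w\le v^{-}=v$, the event above is contained in $\{Y^{\mfu,\tilde{\alpha}}_{t,x,y}(\theta)<v(\theta,X^{\tilde{\alpha}}_{t,x}(\theta))\}$, which therefore has positive probability, i.e.\ $\mathbb{P}(Y^{\mfu,\tilde{\alpha}}_{t,x,y}(\theta)\ge v(\theta,X^{\tilde{\alpha}}_{t,x}(\theta)))<1$ for every $\theta\in\mathbb{S}^{t}$, as required.

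The only delicate point is noticing that the definitions have been designed precisely so that the strategies $\tilde{\mfu}$ in the super-solution case and the adversarial controls $\tilde{\alpha}$ in the sub-solution case do not depend on $\rho$; consequently a single $\mfu$ (respectively $\alpha$) produced from $w\in\Uc^{\pm}$ handles every stopping time $\theta$ uniformly. Once this is recognized, the proof reduces to unpacking the two definitions with $\tau^{\alpha}\equiv t$ and using $w\ge v$ (resp.\ $w\le v$) from \eqref{eq:intfvmavp} combined with Corollary~\ref{coro: v unique viscosity solution}.
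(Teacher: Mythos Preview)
Your proof is correct and follows exactly the same approach as the paper's own proof: both reduce the two DPP statements to the identities $v=v^{+}$ and $v=v^{-}$ from Corollary~\ref{coro: v unique viscosity solution}, then invoke Definitions~\ref{def: Stochasticsuper-solution} and~\ref{def: Stochasticsub-solution} with the trivial family $\tau^{\alpha}\equiv t$ and use $w\ge v$ (resp.\ $w\le v$). Your write-up is simply more explicit about why the concatenations collapse (e.g.\ $\mfu\otimes_{t}\tilde{\mfu}=\tilde{\mfu}$ and $\alpha\otimes_{t}\tilde{\alpha}=\tilde{\alpha}$) and about the uniformity in $\rho$, but the argument is the same.
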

\begin{pf}
DPP 1: If $y>v(t,x)=v^{+}(t,x)$ (due to Corollary~\ref{coro: v unique
viscosity solution}), there exists a $w\in\mathcal{U}^+$ such that
$y>w(t,x)$. From the definition of stochastic super-solution, there
exists $\mathfrak{u}\in\mfU(t)$ such that
\[
Y_{t,x,y}^{\mathfrak{u},\alpha}(\theta)\geq w\bigl(\theta
,X_{t,x}^{\alpha}(\theta)\bigr)\geq v\bigl(\theta,X_{t,x}^{\alpha}(\theta)\bigr)
\]
for all $\theta\in\mathbb{S}^t$ and $\alpha\in\Ac^t$.

DPP 2: If $y<v(t,x)=v^{-}(t,x)=\sup_{w\in\mathcal{U}^-}w(t,x)$, there
exists a $w\in\mathcal{U}^-$ such that $y<w(t,x)$. From the definition
of stochastic sub-solution, for any $\mathfrak{u}\in\mfU(t)$, there
exits an
$\alpha\in\mathcal{A}^t$ such that
\[
\mathbb{P} \bigl(Y_{t,x,y}^{\mathfrak{u},\alpha}(\theta)< w\bigl
(\theta
,X_{t,x}^{\alpha}(\theta)\bigr) \bigr)>0
\]
for all $\theta\in\mathbb{S}^t$. Since $w(\theta,X_{t,x}^{\alpha
}(\theta
))\leq v(\theta,X_{t,x}^{\alpha}(\theta))$, this gives us the
desired result.
\end{pf}

\begin{appendix}
\section*{Appendix}\label{app}
\subsection{Proof of Proposition~\texorpdfstring{\protect\ref{prop:Upnemty}}{2.1}}
We carry out the proof in two steps. First under Assumptions~\ref{ass:
def hat u + regu} and \ref{ass: regu muY_hatu}, we will show that
there exists a classical solution to~\eqref{HJB equation}. Next, we
will show that if we additionally have Assumption~\ref{Assump:
drift_and_volatility}, then every classical super-solution is a
stochastic super-solution, which implies in particular that $\Uc^+$ is
not empty.

\textit{Step \textup{1.} Existence of a classical super-solution to \eqref{HJB
equation}:}

\emph{Step} 1A. In this step we will assume that $\mu_{Y}^{\hat
u}$ is nondecreasing in its $y$-variable.
Letting $\phi(t,x)=-e^{\lambda t}$ we have that
%
\setcounter{equation}{0}
\begin{equation}
\label{eq: constructing_classical_supersolution} \phi_t+H\bigl
(t,x,\phi,D\phi,D^2\phi
\bigr)=-\lambda e^{\lambda t}+\sup_{a\in
A}\bigl\{ -
\mu_Y^{\hat u}\bigl(t,x,\phi(t,x),0,a\bigr) \bigr\}.
\end{equation}
From the linear growth condition of $\mu_Y^{\hat u}$ in
Assumption~\ref
{ass: regu muY_hatu}, we know there exists an $L>0$, such that $-\mu
_Y^{\hat u}(t,x,\phi(t,x),0,a)\leq L(1+|\phi(t,x)|)=L(1+e^{\lambda
t})$. Therefore, from \eqref{eq: constructing_classical_supersolution},
\[
\phi_t+H\bigl(t,x,\phi,D\phi,D^2\phi\bigr)\leq-\lambda
e^{\lambda
t}+L\bigl(1+e^{\lambda t}\bigr)\leq0\qquad \mbox{in } \D, \mbox{ for }
\lambda> 2L.
\]
Fix $\lambda>2L$, and choose $N_2$ such that $-e^{\lambda T}+N_2\geq\|
g\|_{\infty}$. Then $\phi'(T,x)=\phi(T,x)+N_2 \geq g(x)$. From the
assumption that $\mu_{Y}^{\hat u}$ is nondecreasing in its
$y$-variable, it holds that
\[
\phi'_t+H\bigl(t,x,\phi',D
\phi',D^2\phi'\bigr)\leq0 \qquad\mbox{on }
\mathcal{D}_{<T}.
\]
Therefore, $\phi'$ is a classical super-solution.

\textit{Step} 1B. We now show the same result for more general
$\mu
_{Y}^{\hat u}$. This follows the same reparameterization argument
outlined in step 1.2B in the proof of the main theorem.

\textit{Step \textup{2.} Classical super-solutions are stochastic super-solutions.}
Let $w$ be a classical super-solution. Fix $(t,x,y)\in\D\times
\mathbb
{R}$ and $\{\tau^{\alpha}\} \in\mathfrak{S}^t$. Let $\overline{Y}$ be
the unique strong solution (which is thanks to Assumption~\ref{ass:
regu muY_hatu}) of the equation
\begin{eqnarray*}
\overline{Y}(l)&=&Y^{\mathfrak{u},\alpha}_{t,x,y}\bigl(\tau^\alpha
\bigr)\\
&&{}+\int_{\tau^\alpha
}^{\tau^\alpha\vee l}\mu_{Y}^{\hat{u}}
\bigl(s,X_{t,x}^{\alpha
}(s),\overline{Y}(s),
\sigma_{X}\bigl(s,X_{t,x}^{\alpha}(s),\alpha
_{s}\bigr)Dw\bigl(s,X_{t,x}^{\alpha}(s)\bigr),
\alpha_{s} \bigr) \,ds
\\
&&{}+ \int_{\tau^\alpha}^{\tau^\alpha\vee l} \sigma_{X}
\bigl(s,X_{t,x}^{\alpha
}(s),\alpha_{s}\bigr)Dw
\bigl(s,X_{t,x}^{\alpha}(s)\bigr)\,dW_{s},\qquad l \geq\tau
^{\alpha},
\end{eqnarray*}
for any $\mathfrak{u}\in\mfU(t)$ and $\alpha\in\Ac^t$, and set
$\overline
{Y}(s)=Y_{t,x,y}^{\mathfrak{u}, \alpha}(s) $ for $s <\tau^{\alpha
}$. We will
set $\tilde{\mathfrak{u}}$ to be
\[
\tilde{\mathfrak{u}}:=\tilde{\mathfrak{u}}[\alpha](s)=\hat{u}
\bigl(s,X_{t,x}^{\alpha
}(s),\overline{Y}(s), \sigma_{X}
\bigl(s,X_{t,x}^{\alpha}(s),\alpha_s\bigr)Dw
\bigl(s,X_{t,x}^{\alpha}(s)\bigr),\alpha_s\bigr).
\]
It is not difficult to check that $\tilde{\mathfrak{u}}\in\mfU(t,\{
\tau^{\alpha
}\})$. We will show that for any $\mathfrak{u}\in\mfU(t)$, $\alpha
\in\mathcal
{A}^t$ and each stopping time $\rho\in\mathbb{S}^t$, $\tau^{\alpha
}\leq\rho\leq T$ with the simplifying notation $X:= X_{t,x}^{\alpha},
Y:=Y_{t,x,y}^{\mathfrak{u}\otimes_{\tau^{\alpha}}\tilde{\mathfrak
{u}}[\alpha],\alpha}$,
we have
\[
Y(\rho)\geq w\bigl(\rho, X(\rho)\bigr)\qquad \mathbb{P}\mbox{-a.s.}
\mbox{ on } \bigl
\{Y\bigl(\tau^{\alpha}\bigr)> w\bigl(\tau^{\alpha}, X\bigl(\tau^{\alpha}\bigr)\bigr)\bigr\}.
\]
Note that $\overline{Y}= Y_{t,x,y}^{\mathfrak{u}\otimes_{\tau
^{\alpha}}\tilde
{\mathfrak{u}}[\alpha],\alpha}
$ for $s\geq\tau^{\alpha}$. We will carry out the rest of the proof in
two steps.

\textit{Step} 2A. In this step we will assume that $\mu_{Y}^{\hat
u}$ is nondecreasing in its $y$-variable. Let
\begin{eqnarray*}
A&=& \bigl\{Y\bigl(\tau^{\alpha}\bigr)> w\bigl(\tau^{\alpha},
X\bigl(\tau^{\alpha}\bigr)\bigr)\bigr\},\qquad  Z(s)=w\bigl(s,X(s)\bigr),\\
\Gamma(s)&=&
\bigl(Z(s)-Y(s) \bigr)\mathbh{1}_{A}.
\end{eqnarray*}
Therefore, for $s\geq\tau^{\alpha}$,
\begin{eqnarray*}
dY&=& \mu_{Y}^{\hat{u}} \bigl(s,X(s),Y(s), \sigma_{X}
\bigl(s,X(s),\alpha_{s}\bigr)Dw\bigl(s,X(s)\bigr),
\alpha_{s} \bigr)\,ds\\
&&{} + \sigma_{X}\bigl(s,X(s),\alpha
_{s}\bigr)Dw\bigl(s,X(s)\bigr)\,dW_{s},
\\
dZ&=& \bigl\lbrace w_t\bigl(s,X(s)\bigr)+\mu_{X}
\bigl(s,X(s),\alpha_s\bigr)^{\top} Dw\bigl(s,X(s)\bigr)\\
&&\hspace*{18pt}{} +
\tfrac{1}2\operatorname{Tr}\bigl[\sigma_{X}
\sigma_{X}^{\top
}\bigl(s,X(s),\alpha_s
\bigr)D^2w\bigl(s,X(s)\bigr)\bigr] \bigr\rbrace\, ds
\\
&&{}+\sigma_{X}\bigl(s,X(s),\alpha_{s}\bigr)Dw\bigl(s,X(s)
\bigr)\,dW_{s}.
\end{eqnarray*}
From above equations,
%
\begin{equation}
\label{eq: Gamma_integral} \Gamma(s)=\mathbh{1}_{A}\int_{\tau
^{\alpha}}^{s}
\bigl(\xi(u)-\gamma'(u)\bigr) \,du\qquad \mbox{for } s\geq
\tau^{\alpha},
\end{equation}
where
\begin{eqnarray*}
\gamma'&:=&
\mu_{Y}^{\hat{u}}\bigl(\cdot,X,w(\cdot,X),\sigma_{X}(\cdot,X,\alpha)Dw(\cdot,X),\alpha\bigr)-\mu_{X}(\cdot,X,
\alpha)^{\top} Dw(\cdot,X) \\
&&{}- \tfrac{1}2 \operatorname{Tr}\bigl[
\sigma_{X}\sigma_{X}^{\top}(\cdot,X,\alpha
)D^2w(\cdot,X)\bigr]-w_t(\cdot,X)
\end{eqnarray*}
and
\[
\xi:=\mu_{Y}^{\hat{u}} \bigl(\cdot,X,Z, \sigma_{X}(\cdot,X,\alpha)Dw(\cdot,X),\alpha\bigr) -\mu_{Y}^{\hat{u}}
\bigl(\cdot,X,Y, \sigma_{X}(\cdot,X, \alpha)Dw(\cdot,X),\alpha
\bigr).
\]
Since $w$ is a classical super-solution $\gamma'\geq0$. Then from
\eqref{eq: Gamma_integral} it follows that
\[
\Gamma(s)\leq\mathbh{1}_A\int_{\tau^{\alpha}}^{s}
\xi(u) \,du \quad\mbox{and} \quad\Gamma^{+}(s)\leq\mathbh{1}_A\int
_{\tau^{\alpha}}^{s} \xi^{+}(u) \,du\qquad \mbox{for } s
\geq\tau^{\alpha}.
\]
From the Lipschitz continuity of $\mu^{\hat{u}}_Y$ in $y$-variable in
Assumption~\ref{ass: regu muY_hatu},
\[
\label{eq: fun_gronwall} \Gamma^{+}(s)\leq\mathbh{1}_A \int
_{\tau^{\alpha}}^{s} \xi^{+}(u) \,du \leq\int
_{\tau^{\alpha}}^{s} L \Gamma^{+}(u) \,du \qquad\mbox{for }
s\geq\tau^{\alpha},
\]
where we also use the assumption that $\mu_Y^{\hat u}$ is
nondecreasing in its $y$-variable to obtain the second inequality.
Since $\mathbb{E}\Gamma^+(\tau^{\alpha})=0$, an application of
Gronwall's inequality implies that
$\mathbb{E}\Gamma^+(\rho)\leq0$.

\textit{Step} 2B: Now we will show the same result for more general
$\mu_{Y}^{\hat u}$. However, this again follows the same
reparameterization argument outlined in step 1.2B in the proof of the
main theorem. 

\subsection{Proof of Proposition~\texorpdfstring{\protect\ref{Prop: U+ notempty}}{2.4}}
Take $w(t,x)=m$ for any $(t,x)\in\D$, where the constant $m$ is a lower
bound of $g$. For any given $\mathfrak{u}\in\mfU(t),\alpha\in\Ac
^t$, choose any
$\widetilde{\alpha}\in\Ac^t$.
Let $B\subset\{Y(\tau^{\alpha})<w(\tau,X(\tau^{\alpha}))\}$ and
$\mathbb
{P}(B)>0$.
Set
\[
\theta_s \triangleq\cases{ %
\displaystyle\frac{\mu_{Y}\sigma_Y}{\|\sigma_{Y}\|^2}\bigl
(s,X(s),Y(s),\mathfrak{u}[\alpha\otimes_{\tau^{\alpha}}
\widetilde{\alpha}]_{s},[\alpha\otimes_{\tau^{\alpha
}}\widetilde{
\alpha}]_s\bigr),\vspace*{2pt}\cr
\hspace*{35pt}\mbox{if } \sigma_{Y}\bigl(s,X(s),Y(s),
\mathfrak{u}[\alpha\otimes_{\tau^{\alpha}}\widetilde{\alpha
}]_{s},[\alpha
\otimes_{\tau^{\alpha
}}\widetilde{\alpha}]_s\bigr)\neq0,
\vspace*{2pt}\cr
C,\qquad \mbox{otherwise},}
\]
for some constant vector C in $\R^d$. Therefore, $\theta_{s}$ satisfies
Novikov's condition due to Assumption~\ref{assum: stochastic
semisolution is nonemptyp}, and $\widetilde{W}(s)=W(s)-\int
_{0}^{s}\theta_{u}\,du$ is a Brownian
motion under the probability measure $\mathbb{Q}$, where
\begin{eqnarray*}
\mathbb{Q}(A)&=&\mathbb{E}_{\mathbb{P}}(Z_T \mathbh{1}_{A})
\qquad\mbox{for all } A\in\mathcal{F}\quad \mbox{and}\\
 Z_s&:=&\exp\biggl(\int
_0^s \theta_u
\,dW_u-\frac{1}{2}\int_0^s
\|\theta_u\|^2 \,du \biggr).
\end{eqnarray*}
$Z_T\in\L^q(\mathbb{P})$ for any $q\geq1$ since $\theta$ is a
bounded. From Assumption~\ref{assum: stochastic semisolution is
nonempty} and the assumption that $\sigma_Y$ is invertible in its
$u$-variable (Assumption~\ref{ass: def hat u + regu}), it follows that
$\sigma_Y(t,x,y,u,a)=0$ implies $\mu_Y(t,x,y,u,a)=0$. Therefore under
$\mathbb{Q}$
\[
dY(s)=\sigma_{Y}\bigl(s,X(s),Y(s),\mathfrak{u}[\widetilde{\alpha
}]_{s},\widetilde{\alpha}_s\bigr)\,d\widetilde{W}_{s}\qquad
\mbox{for } s\geq\tau^{\alpha},
\]
where $Y:=Y_{t,x,y}^{\mathfrak{u},\alpha\otimes_{\tau^{\alpha
}}\widetilde{\alpha
}}$. We will show that the $\mathbb{Q}$-local martingale $Y$ is
actually a $\mathbb{Q}$-martingale.
Assumption~\ref{Assump: drift_and_volatility} implies that
%
\begin{equation}
\label{eq: max_square_integrability} \mathbb{E}_{\mathbb{P}} \Bigl
[\sup_{0\leq s \leq T}\bigl|Y(s)\bigr|^2
\Bigr]<\infty;
\end{equation}
see, for example, Theorem~1.3.5 in \cite{Pham_book} or Theorem~2.2 in
\cite{Touzi_book}.
As a result an application of H\"older's inequality yields
%
\begin{eqnarray}
\label{eq: max_integrability under Q} \mathbb{E}_{\mathbb{Q}} \Bigl
[\sup_{0\leq s \leq T}\bigl|Y(s)\bigr|
\Bigr]&=& \mathbb{E}_{\mathbb{P}} \Bigl[\sup_{0\leq s \leq T}\bigl|Y(s)\bigr|
\cdot Z_T \Bigr]
\nonumber
\\[-8pt]
\\[-8pt]
\nonumber
&\leq&\mathbb{E}_{\mathbb{P}} \Bigl[\sup
_{0\leq s \leq T}\bigl|Y(s)\bigr|^2 \Bigr] \mathbb{E}_{\mathbb{P}}
\bigl[Z_T^2\bigr]<\infty.
\end{eqnarray}
From \eqref{eq: max_integrability under Q}, $Y$ is a martingale on
$[\tau^{\alpha}, T]$ under $\mathbb{Q}$. Moreover, since $\mathbb{Q}$
is equivalent to $\mathbb{P}$ we have $\mathbb{Q}(B)>0$. As a result of
the latter two statements, for any $\rho\geq\tau^{\alpha}$,
\[
Y(\rho)\leq Y\bigl(\tau^{\alpha}\bigr)\qquad \mbox{on some set }
H\subset B
\mbox{ with } \mathbb{Q}(H)>0.
\]
Since $H\subset B$,
\[
Y(\rho)\leq Y\bigl(\tau^{\alpha}\bigr)<m=w(t,x) \qquad\mbox{on } H.
\]
This implies $\mathbb{Q}(Y(\rho)<m|B)>0$ and by equivalence of the
measures $\mathbb{P}(Y(\rho)<m|B)>0$. Therefore, $w(t,x)=m$ is a
stochastic sub-solution. \hfill$\square$

\subsection{Some well-known results from the theory of viscosity solutions}
In this subsection, we introduce an alternative definition of viscosity
solutions and Ishii's lemma following \cite{Pham_book}. First, we
define the
second-order super-jet of an USC function $U$ at a
point $(\overline{t}, \overline{x})\in[0, T)\times\mathbb{R}^d$ as
the set of elements $(\overline{q}, \overline{p}, \overline{M})\in
\R
\times\R^d\times\mathcal{S}^d$
satisfying
\begin{eqnarray*}
U(t, x) &\leq& U(\overline{t}, \overline{x}) + \overline{q}(t -
\overline{t}) +
\overline{p}\cdot(x - \overline{x}) + \tfrac
{1}{2}M(x-\overline{x})\cdot(x-
\overline{x})\\
&&{}+o\bigl(|t- \overline{t}|+|x - \overline{x}|^2\bigr).
\end{eqnarray*}
This set is denoted by $P^{2,+}U(\overline{t}, \overline{x})$.
Similarly, $P^{2,-}V(\overline{t}, \overline{x})$, the
second-order sub-jet of a LSC function V at the
point $(\overline{t}, \overline{x})\in[0, T)\times\mathbb{R}^d$ is
defined as the set of elements $(\overline{q}, \overline{p},
\overline
{M})\in\R\times\R^d\times\mathcal{S}^d$
satisfying
\begin{eqnarray*}
V(t, x) &\geq& V(\overline{t}, \overline{x}) + \overline{q}(t -
\overline{t}) +
\overline{p}\cdot(x - \overline{x}) + \tfrac
{1}{2}M(x-\overline{x})\cdot(x-
\overline{x})\\
&&{}+o\bigl(|t- \overline{t}|+|x - \overline{x}|^2\bigr).
\end{eqnarray*}

For technical reasons related to
Ishii's lemma, we also need to consider the limiting super-jets and sub-jets.
More precisely, we define $\overline{P}^{2,+}U(t, x)$ as the set of
elements $(q,p,M)\in\R\times\R^d\times\mathcal{S}^d$ for
which there exists a sequence $(t_{\eps}, x_{\eps}, q_{\eps},
p_{\eps
},M_{\eps})_{\eps}$ satisfying $(q_{\eps}, p_{\eps},M_{\eps})\in
P^{2,+}U(t_\eps, x_\eps)$ and
$(t_{\eps}, x_{\eps},U(t_{\eps}, x_{\eps}),\break   q_{\eps}, p_{\eps
},M_{\eps
}) \rightarrow(t, x,U(t, x), q, p,M)$. The set $\overline{P}^{2,-}V(t,
x)$ is defined
similarly. Now we state the alternative definition of viscosity
solutions to \eqref{HJB equation}.
%
\begin{lemmaa}\label{lemma: alternative_def_of_viscosity_solution}
A USC (resp., LSC) function w on $\mathcal{D}_{<T}$ is a viscosity sub-solution
(resp., super-solution) to \eqref{HJB equation} if and only if for all
$(t, x)\in\mathcal{D}_{<T}$,
and all $(q, p,M) \in\overline{P}^{2,+}w(t, x)$ [resp., $\overline
{P}^{2,-}w(t, x)$],
\[
-q-H\bigl(t, x,w(t, x), p,M\bigr) \leq(\mbox{resp}., \geq)\ 0.
\]
\end{lemmaa}
Finally, we state Ishii's lemma used in \cite{Pham_book} without proof
and refer the reader to Theorem~8.3 in \cite{MR1118699}.
%
\begin{lemmaa}[(Ishii's lemma)]\label{lemma: Ishii's Lemma}
Let U (resp., V) be an USC (resp., LSC) function on $\mathcal{D}_{<T}$,
$\varphi\in
C^{1,1,2,2}([0, T)^2\times\R^d\times
\R^d)$, and $(t_0, s_0, x_0, y_0) \in[0, T)^2\times\R^d\times
\R^d$ a local maximum of $U(t, x)-V (s, y)-\varphi(t, s, x, y)$.
Then, for all $\eta> 0$, there exist $M, N \in\mathcal{S}^d$ satisfying
\begin{eqnarray*}
\bigl(\varphi_t(t_0, s_0,
x_0, y_0), D_x\varphi(t_0,
s_0, x_0, y_0), M \bigr)&\in&
\overline{P}^{2,+}U(t, x),
\\
\bigl(-\varphi_s(t_0, s_0,
x_0, y_0), -D_y\varphi(t_0,
s_0, x_0, y_0), N \bigr)&\in&
\overline{P}^{2,-}V(t, x)
\end{eqnarray*}
and
\[
\pmatrix{ M & 0
\vspace*{2pt}\cr
0 & N}
\leq D^2_{x,y}
\varphi(t_0, s_0, x_0,
y_0)+\eta\bigl(D^2_{x,y}
\varphi(t_0, s_0, x_0,
y_0) \bigr)^2.
\]
\end{lemmaa}

\begin{remark}
From Remark~4.4.9 in \cite{Pham_book}, by choosing $\varphi_{\eps
}(t,s,x,y):=\frac{1}{2\eps}(|t-s|^2+|x-y|^2)$ and $\eta=\eps$, for any
$d \times n$ matrices $C,D$, we get
%
\begin{equation}
\label{eq: trace_inequality from Ishii'e Lemma} \operatorname
{Tr}\bigl(CC^{\top}M- DD^{\top}N
\bigr)\leq\frac{3}{\eps}\operatorname{Tr}\bigl((C-D) (C-D)^{\top}\bigr).
\end{equation}
\end{remark}
\end{appendix}
%

%





\printaddresses
\end{document}